\numberwithin{equation}{section}
\definecolor{brown}{cmyk}{0, 0.72, 1, 0.45}
\definecolor{grey}{gray}{0.5}
\newcounter{rot}
\def\leb{\leq_b}
\def\nn{\nonumber}
\def\a{\alpha} \def\b{\beta} \def\d{\delta} \def\D{\Delta}
\def\e{\epsilon}  \def\F{{\Phi}}  \def\g{\gamma}
\def\G{\Gamma} \def\i{\iota} \def\k{\kappa} 
     \def\l{\lambda}
\def\La{\Lambda} \def\m{\mu} \def\n{\nu} 
\def\r{\rho}  \def\s{\sigma} 
 \def\om{\omega}
\def\cD{{\cal D}}
\def\cP{{\cal P}}
\def\cC{{\cal C}}
\def\cB{{\cal B}}
\def\cV{{\cal V}}
\newtheorem*{conjecture*}{Conjecture}
\newtheorem{theorem}{Theorem}[section]
\newtheorem*{theorem*}{Theorem}
\newtheorem{lemma}[theorem]{Lemma}
\newtheorem{fact}[theorem]{Fact}
\newtheorem{corollary}[theorem]{Corollary}
\newtheorem{remark}[theorem]{Remark}
\newcommand{\brac}[1]{\left(#1\right)}
\newcommand{\bfrac}[2]{\left(\frac{#1}{#2}\right)}
\def\cE{{\cal E}}
\newcommand{\set}[1]{\left\{#1\right\}}
\def\sm{\setminus}
\def\seq{\subseteq}
\def\E{\mathbb{E}}
\def\P{\mathbb{P}}
\def\Pr{\mathbb{P}}
\def\cF{{\cal F}}
\newcommand{\bfo}{\mathbbm{1}}
\newcommand{\eps}{\epsilon}
\newcommand{\of}[1]{\left( #1 \right) }
\newcommand{\abs}[1]{\left| #1 \right|}
\newcommand{\sqbs}[1]{\left[ #1 \right]}
\renewcommand{\E}[1]{\mathbb{E}\sqbs{#1}}
\newcommand{\tbf}[1]{\textbf{#1}}
\renewcommand{\P}[1]{\mathbb{P}\left[ #1 \right]}
\renewcommand{\Pr}[1]{\mathbb{P}\left[ #1 \right]}
\newcommand{\ignore}[1]{}
\newcommand{\cA}{{\cal A}}
\newcommand{\beq}[1]{\begin{equation}\label{#1}}
\newcommand{\eeq}{\end{equation}}
\newcommand{\Bin}{\operatorname{Bin}}
\DeclareMathOperator{\avg}{avg}
\DeclareMathOperator{\med}{med}
\DeclareMathOperator{\range}{range}
\def\cR{{\cal R}}
\def\w{{\bf w}}
\def\cH{{\cal H}}
\newcommand{\wiof}[3]{\textbf{w}_i\left(\{#1,#2\},#3\right)}
\newcommand{\wiofp}[3]{\textbf{w}_i\left((#1,#2),#3\right)}
\newcommand{\bv}{\tbf{v}}
\newcommand{\bz}{\tbf{z}}
\def\HH{\Upsilon}
\title{Rainbow Matchings and Hamilton Cycles in Random Graphs}
\author{
Deepak Bal\thanks{Department of Mathematics, Ryerson University, Toronto ON M5B 2K3, Canada}
\and 
Alan Frieze\thanks{Department of Mathematical Sciences, Carnegie Mellon University,
Pittsburgh PA15213, USA. Research supported in part by NSF grant ccf1013110.}
}
\begin{document}
\maketitle

\begin{abstract}
Let $HP_{n,m,k}$ be drawn uniformly from all $k$-uniform, $k$-partite hypergraphs where each part of the partition is a disjoint copy of $[n]$. We let $HP^{(\k)}_{n,m,k}$ be an edge colored version, where we color each edge randomly from one of $\k$ colors. We show that if $\k=n$ and $m=Kn\log n$ where $K$ is sufficiently large then w.h.p. there is a rainbow colored perfect matching. I.e. a perfect matching in which every edge has a different color.
We also show that if $n$ is even and $m=Kn\log n$ where $K$ is sufficiently large then w.h.p. there is a rainbow colored Hamilton cycle in $G^{(n)}_{n,m}$. Here $G^{(n)}_{n,m}$ denotes a random edge coloring of $G_{n,m}$ with $n$ colors. When $n$ is odd, our proof requires $m=\om(n\log n)$ for there to be a rainbow Hamilton cycle. 
\end{abstract}
\section{Introduction}
Given an edge-colored hypergraph, a set $S$ of edges is said to be {\em rainbow}
colored if every edge in $S$ has a diffent color. In this paper we
consider the existence of rainbow perfect matchings in $k$-uniform, $k$-partite hypergraphs and Hamilton
cycles in randomly colored random graphs. 

Let $U_1,U_2,\ldots,U_k$ denote $k$ disjoint sets of size $n$. Let $\cH\cP^{(\k)}_{n,m,k}$ denote the set of $k$-partite, $k$-uniform hypergraphs with vertex set $V = U_1 \cup U_2\cup \cdots \cup U_k$ and $m$ edges, each of which has been randomly colored with one of $\k$ colors. The random edge colored graph $HP^{(\k)}_{n,m,k}$ is sampled uniformly from $\cH\cP^{(\k)}_{n,m,k}$. 
 
In this paper we prove the following result
\begin{theorem}\label{th1}
There exists a constant $K$ such that if  $m \ge Kn\log n$ then 
$$\lim_{n\to\infty}\Pr{HP^{(n)}_{n,m,k}\text{ contains a rainbow perfect matching}}=1.$$
\end{theorem}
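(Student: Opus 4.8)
The plan is to remove the colours by promoting ``colour'' to an extra coordinate, turning a rainbow perfect matching of $HP^{(n)}_{n,m,k}$ into an ordinary perfect matching of an auxiliary hypergraph. Write $V=U_1\cup\cdots\cup U_k$, let $C=[n]$ be the colour set, and let $c:E\to C$ be the random colouring of the edge set $E$ of $HP^{(n)}_{n,m,k}$. Define a $(k+1)$-partite, $(k+1)$-uniform hypergraph $H^{+}$ on vertex classes $U_1,\ldots,U_k,C$ with edge set $\set{(v_1,\ldots,v_k,c(e)):\ e=(v_1,\ldots,v_k)\in E}$; since the tuples of $E$ are distinct, $|E(H^{+})|=m$. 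A perfect matching of $H^{+}$ is a set of $n$ edges meeting each class exactly once: meeting $C$ forces its $n$ colours to be all of $[n]$, hence distinct, while meeting $U_1,\ldots,U_k$ forces the underlying $k$-tuples to form a perfect matching of $HP^{(n)}_{n,m,k}$; conversely a rainbow perfect matching lifts to a perfect matching of $H^{+}$. Hence Theorem~\ref{th1} is equivalent to the assertion that $H^{+}$ has a perfect matching w.h.p.

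Next I would pin down the law of $H^{+}$. The $k$-tuples of $E$ form a uniform random $m$-subset of $U_1\times\cdots\times U_k$ and the colours are i.i.d.\ uniform on $C$, so $H^{+}$ has exactly the distribution of a uniform random $m$-subset of $U_1\times\cdots\times U_k\times C$ conditioned on its projection to the first $k$ coordinates being injective. For $k\ge 3$ the expected number of pairs of edges of $HP_{n,m,k+1}$ agreeing on their first $k$ coordinates is $O(m^{2}/n^{k})=o(1)$, so this conditioning holds with probability $1-o(1)$ and $H^{+}$ is within $o(1)$ in total variation of $HP_{n,m,k+1}$. For $k=2$ the conditioning is not negligible, but it only spreads the hypergraph out further --- note that the projection of $H^{+}$ to its first two classes is exactly the standard random bipartite graph $G_{n,n,m}$ --- so the perfect-matching argument, which uses only such spread-outness together with the independence of the colours, goes through with a routine modification. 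Either way, it suffices to show that $HP_{n,M,k+1}$ has a perfect matching w.h.p.\ when $M=Kn\log n$ with $K$ large.

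For this I would invoke the $r$-partite form of the resolution of Shamir's problem: there is a constant $c_{r}$ such that $HP_{n,M,r}$ has a perfect matching w.h.p.\ whenever $M\ge c_{r}\,n\log n$. Applying it with $r=k+1$ and $K\ge c_{k+1}$ completes the proof. This last input is where the difficulty lies, and it is genuinely out of reach of the plain second moment method: if $X$ is the number of perfect matchings of $HP_{n,M,r}$ with $M=cn\log n$, then grouping ordered pairs of matchings by the number $\ell$ of common edges gives $\mathbb{E}[X^{2}]/\mathbb{E}[X]^{2}\approx\sum_{\ell\ge 0}\frac{1}{\ell!}\of{\frac{n}{c\log n}}^{\ell}=e^{\Theta(n/\log n)}$, so the variance overwhelms the square of the mean. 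One therefore has to fall back on the heavier machinery behind Shamir's problem --- the fractional-matching-plus-rounding/entropy approach of Johansson--Kahn--Vu, or an absorbing argument --- in the $r$-partite setting; the hypothesis that $K$ is ``sufficiently large'' merely relaxes the constants, it does not dispense with that machinery. Everything else --- the colour-to-coordinate reduction, the distributional comparison, and routine first-moment facts such as all vertices of $H^{+}$ having degree $\Theta(K\log n)$ --- is elementary.
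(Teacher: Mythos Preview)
Your reduction---promote colour to a $(k+1)$st coordinate and look for an ordinary perfect matching in the auxiliary hypergraph $H^{+}$---is exactly the idea the paper itself opens with in its outline. The authors explicitly write that the ``fly in the ointment'' is that ``we cannot have two distinct edges $\{u_1,\ldots,u_k,c_i\}$, $i=1,2$'', and that they ``have not been able to find a simple way of resolving this technicality, other than modifying the proof in \cite{JKV08}''. Your proposal does not resolve it either. For $k\ge 3$ your total-variation step is fine: the expected number of projection-collisions in $HP_{n,m,k+1}$ is $O(m^2/n^k)=o(1)$, so conditioning on an injective projection costs nothing and one can invoke a $(k+1)$-partite Johansson--Kahn--Vu statement as a black box (modulo checking that the partite form really is available off the shelf; it is not quite explicit in \cite{JKV08}). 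That would indeed shorten the argument for $k\ge 3$ relative to the paper's uniform treatment.

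The genuine gap is at $k=2$. There the expected number of collisions is $\Theta((\log n)^2)$, the conditioning is far from vacuous, and your sentence ``it only spreads the hypergraph out further \ldots\ so the perfect-matching argument \ldots\ goes through with a routine modification'' is carrying the entire weight of the theorem. There is no off-the-shelf monotonicity or coupling lemma to cite: ``more spread out'' is not a partial order under which existence of a perfect matching is increasing, and the dependence structure of $H^{+}$ (two edges over the same $U_1\times U_2$ pair are \emph{forbidden}, not merely rare) is genuinely different from that of $HP_{n,m,3}$. What the paper does is precisely the ``modification'' you label routine: it re-runs the full JKV deletion/entropy argument directly in the coloured setting, tracking the number $\F_i$ of rainbow matchings, establishing the max-to-average bound $\cB_i$ via a max-to-median event $\cC_i$ and Lemma~\ref{JKV6.2}, and handling two separate symmetry cases according to whether the maximiser arises from a vertex coordinate or from the colour coordinate. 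That occupies the bulk of the paper and is not a routine tweak. So your proposal is a correct high-level roadmap, and for $k\ge 3$ arguably a cleaner one, but for $k=2$---and hence for the theorem as stated---it leaves the real work undone.
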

This result is best possible in terms of the number of colors $n$ and best possible up to a constant factor in terms of the number of edges.

We get the corresponding result for $k$-uniform hypergraphs $H^{(n)}_{kn,m,k}$ for free.
Here the edge set of $H^{(n)}_{kn,m,k}$ is a random element of $\binom{\binom{[kn]}{k}}{m}$ and each edge is randomly colored from $[n]$.
\begin{corollary}
If $m=Ln\log n$ and $L$ is sufficiently large then w.h.p. $H^{(n)}_{kn,m,k}$ contains a rainbow perfect matching. 
\end{corollary}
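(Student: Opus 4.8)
The plan is to derive the corollary directly from Theorem~\ref{th1} by locating a randomly edge-coloured $k$-partite hypergraph inside $H^{(n)}_{kn,m,k}$. First I would fix an arbitrary balanced partition $[kn]=U_1\cup\cdots\cup U_k$ with $|U_i|=n$ for each $i$, and call a $k$-subset of $[kn]$ \emph{crossing} if it contains exactly one vertex of each $U_i$; of the $\binom{kn}{k}$ possible edges exactly $n^k$ are crossing. Let $\mathcal C$ denote the coloured sub-hypergraph of $H^{(n)}_{kn,m,k}$ formed by its crossing edges, and put $M=|\mathcal C|$. A rainbow perfect matching of $\mathcal C$ is in particular a rainbow perfect matching of $H^{(n)}_{kn,m,k}$, so it is enough to show that $\mathcal C$ has one w.h.p.

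The crucial observation is that, regarded as a coloured $k$-partite hypergraph on the parts $U_1,\dots,U_k$, $\mathcal C$ has \emph{exactly} the distribution of $HP^{(n)}_{n,M,k}$. Indeed, the edge set of $H^{(n)}_{kn,m,k}$ is a uniformly random $m$-element subset of $\binom{[kn]}{k}$, so conditioned on the value of $M$ the crossing edges form a uniformly random $M$-element subset of the $n^k$ crossing $k$-sets; and because each edge is coloured independently and uniformly from $[n]$, the crossing edges carry i.i.d.\ uniform colours. These are precisely the defining properties of $HP^{(n)}_{n,M,k}$.

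It then remains to control $M$ and feed the result into Theorem~\ref{th1}. The random variable $M$ is hypergeometric with mean $m\,p_n$, where $p_n=n^k/\binom{kn}{k}\to k!/k^k$ as $n\to\infty$; standard concentration for the hypergeometric distribution gives $M\ge m p_n-\sqrt{m}\log n=(1-o(1))(k!/k^k)Ln\log n$ w.h.p. Taking $L$ large enough that $(k!/k^k)L/2\ge K$, with $K$ the constant of Theorem~\ref{th1}, we get $M\ge Kn\log n$ w.h.p. Since the property of containing a rainbow perfect matching is monotone increasing in the coloured edge set (a uniform $m'$-subset of the edges of $HP^{(n)}_{n,m'',k}$ is distributed as $HP^{(n)}_{n,m',k}$, so these models can be coupled by inclusion), Theorem~\ref{th1} gives $\Pr{HP^{(n)}_{n,m',k}\text{ has a rainbow perfect matching}}\ge\Pr{HP^{(n)}_{n,\lceil Kn\log n\rceil,k}\text{ has a rainbow perfect matching}}=1-o(1)$ uniformly over all $m'\ge Kn\log n$; conditioning on the value of $M$ and splitting on the event $\{M\ge Kn\log n\}$ then shows that $\mathcal C$, hence $H^{(n)}_{kn,m,k}$, has a rainbow perfect matching w.h.p.

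I do not expect a genuine obstacle in this argument — it is exactly the ``for free'' reduction promised in the text. The two points that need a little care are the distributional identification in the second paragraph (the crossing sub-hypergraph must have precisely, not merely approximately, the law of $HP^{(n)}_{n,M,k}$), and the quantitative choice of $L$, which has to be taken large enough to absorb the factor $k!/k^k<1$ lost when the non-crossing edges are discarded.
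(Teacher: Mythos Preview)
Your proposal is correct and takes exactly the paper's approach: restrict to the crossing edges of a balanced $k$-partition of $[kn]$ and apply Theorem~\ref{th1}. The paper's two-line proof uses a random partition (equivalent to your fixed one by permutation-invariance of $H^{(n)}_{kn,m,k}$) and records the constant as $K=L/k^k$, leaving implicit the distributional identification, the concentration of $M$, and the uniformity in $m'$ that you have spelled out.
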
 
\begin{proof}
We simply partition $[kn]$ randomly into $k$ sets of size $[n]$. Then we apply Theorem \ref{th1} with $K=L/k^k$.
\end{proof}

When $k=2$ the result of Theorem \ref{th1} can be expressed as follows:
\begin{corollary}
Let $A$ be an $n\times n$ matrix consructed as follows: Choose $Knlog n$ entries at random and give each a random integer from $[n]$. The remaining entries can be filled with zeroes. Then w.h.p. $A$ contains a latin transversal i.e. an $n\times n$ matrix $B$ with a single non-zero in each row and column, such that each $x\in [n]$ appears as a non-zero of $B$.
\end{corollary}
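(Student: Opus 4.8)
The plan is to recognise that this corollary is nothing more than the $k=2$ case of Theorem~\ref{th1} rewritten in the language of square matrices, so the entire proof consists of a dictionary between bipartite graphs on $U_1\cup U_2$ and $n\times n$ matrices, followed by a single invocation of the theorem.

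First I would fix the dictionary. Identify $U_1$ with the row index set $[n]$ and $U_2$ with the column index set $[n]$. A potential edge $\{i,j\}$ with $i\in U_1$ and $j\in U_2$ then corresponds to the matrix position $(i,j)$, and there are exactly $n^2$ objects of each kind. Consequently, choosing a uniformly random set of $m=Kn\log n$ edges and colouring each chosen edge uniformly at random from $[n]$ produces exactly the matrix $A$ of the statement: the $m$ selected positions each receive an independent uniform value from $[n]$, and the remaining positions are set to $0$. Thus the random matrix $A$ and (the bipartite reading of) $HP^{(n)}_{n,m,2}$ have the same distribution, provided one reads ``choose $Kn\log n$ entries at random'' as ``pick a uniformly random $m$-subset of the $n^2$ positions'', which matches the edge law of $HP^{(n)}_{n,m,2}$.

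Next I would translate the two combinatorial notions. A perfect matching of the bipartite graph on $U_1\cup U_2$ is a set of $n$ edges, one incident to each vertex of $U_1$ and one to each vertex of $U_2$; under the dictionary this is precisely a choice of $n$ nonzero entries of $A$ with exactly one in each row and one in each column, i.e.\ a matrix $B$ of the prescribed shape. The matching being rainbow means its $n$ edges carry $n$ distinct colours, and since there are only $n$ colours available this forces every $x\in[n]$ to appear exactly once among the entries of $B$, which is exactly the latin-transversal condition. Hence Theorem~\ref{th1} with $k=2$ (and with the same constant $K$) asserts that for $m\ge Kn\log n$, w.h.p.\ $HP^{(n)}_{n,m,2}$ contains a rainbow perfect matching, and the two translations turn this directly into the statement that $A$ contains a latin transversal w.h.p. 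There is no genuine obstacle in this argument; the only point requiring care is the interpretation of the random matrix construction so that its distribution coincides with that of $HP^{(n)}_{n,m,2}$.
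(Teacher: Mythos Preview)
Your proposal is correct and matches the paper's approach exactly: the paper simply prefaces the corollary with ``When $k=2$ the result of Theorem~\ref{th1} can be expressed as follows'' and gives no further proof, treating the statement as a direct restatement. Your dictionary between bipartite edge-coloured hypergraphs and partially filled matrices makes this identification explicit, and there is nothing missing or incorrect in what you wrote.
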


We can use Theorem \ref{th1} and a result of Janson and Wormald \cite{JW} to prove the following theorem on rainbow Hamilton cycles:
\begin{theorem}\label{th2}
There exists a constant $K$ such that if  $m \ge Kn\log n$ then with high probability, 
$$\lim_{\substack{n\to\infty\\ n\text {even}}}\Pr{G_{n,m;n}\text{ contains a rainbow Hamilton cycle}}=1.$$
When $n$ is odd we replace $m=Kn\log n$, by $m=\om n\log n$, where
$\om\to\infty$ arbitrarily slowly.
\end{theorem}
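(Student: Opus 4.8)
The plan is to regard a Hamilton cycle of an $n$-vertex graph with $n$ even as the edge-disjoint union of two perfect matchings whose union is connected, and to produce each matching by Theorem~\ref{th1}. For $n$ even, fix a partition of the colour set $[n]=C_1\cup C_2$ with $|C_1|=|C_2|=n/2$ and take a uniformly random balanced partition $[n]=A\cup B$ of the vertex set of $G=G_{n,m;n}$. W.h.p.\ a constant fraction of the $m$ edges run between $A$ and $B$, and a constant fraction of those carry a colour in $C_i$; conditioned on this, the colour-$C_i$ edges from $A$ to $B$ form, for each $i$, a uniformly random bipartite graph on parts of size $n/2$ with a uniformly random $C_i$-colouring and at least $K'(n/2)\log(n/2)$ edges, where $K'$ is as large as we wish once $K$ is. Hence Theorem~\ref{th1}, applied with $k=2$, part size $n/2$ and colour set $C_i$ (of size exactly $n/2$), gives w.h.p.\ a rainbow perfect matching $M_i$ between $A$ and $B$ with all colours in $C_i$, for $i=1,2$. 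Since $C_1\cap C_2=\emptyset$, $H:=M_1\cup M_2$ is a spanning, $2$-regular, rainbow subgraph of $G$ --- a rainbow $2$-factor using each of the $n$ colours exactly once.

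What remains is to replace $H$ by a single cycle, and this is where the result of Janson and Wormald is used. Contracting the edges of $M_1$ turns $M_2$ into a $2$-regular multigraph on $n/2$ super-vertices whose components are exactly the cycles of $H$; for a union of \emph{random} perfect matchings the number of such components is, by the Janson--Wormald estimate, w.h.p.\ $O(\log n)$ (it is distributed roughly like the cycle count of a random permutation). Running the matching construction of Theorem~\ref{th1} on a uniformly random sub-collection of the available colour-$C_2$ edges, I would argue that the resulting $M_2$ is close enough to uniform for this estimate to apply, so that w.h.p.\ $H$ has only $O(\log n)$ cycles (reconciling the rainbow construction of $M_2$ with the randomness the estimate needs is itself a point to watch). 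One then merges these cycles into one by a sequence of local rewirings --- delete a bounded number of edges from two cycles and reinsert the same number across the cut, keeping the subgraph rainbow. The delicate point, and the main obstacle, is the colour bookkeeping: since $H$ already uses all $n$ colours, each rewiring must reinsert precisely the colours it deletes, so a merge succeeds only if $G$ happens to contain replacement edges of exactly the right colours; making all $O(\log n)$ merges available simultaneously is a rainbow cycle-merging lemma for $G_{n,m}$, and it is here that the constant $K$ must be taken large.

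For odd $n$ there is no perfect matching of $[n]$, so one first deletes a vertex $v$ and runs the even-$n$ argument on $[n]\setminus\{v\}$ to obtain a rainbow Hamilton path of $G-v$ whose endpoints can be moved by P\'osa-type rotations; one then reinserts $v$ by joining it to a suitable pair of endpoints with two edges of $G$ whose colours lie among the (at least three) colours not yet used, and closes the cycle. Forcing both the rotations and this reinsertion to succeed at the single fixed vertex $v$, under the rainbow constraint, is exactly what $m=Kn\log n$ is too weak to guarantee --- hence the weaker hypothesis $m=\omega(n\log n)$ in the odd case --- and pushing this last step through is the principal difficulty there.
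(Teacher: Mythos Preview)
Your high-level plan --- build rainbow perfect matchings via Theorem~\ref{th1} and then assemble a Hamilton cycle --- is the same as the paper's, but your execution diverges at the key step and leaves a genuine gap.

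You build \emph{two} matchings $M_1,M_2$, obtain a rainbow $2$-factor $H$, and then face the problem of merging its cycles into one while preserving the rainbow condition. As you yourself note, this merging is the ``delicate point, and the main obstacle'': since $H$ already uses all $n$ colours, every rewiring must reinsert precisely the colours it removes, and you give no argument that the required replacement edges exist simultaneously for all $O(\log n)$ merges. You also correctly flag (but do not resolve) the problem that $M_2$ is not a uniformly random matching, so even the $O(\log n)$ cycle count is not established. These are not technicalities; the proof as written stops here.

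The paper sidesteps both difficulties by building \emph{eight} rainbow matchings rather than two. Using a random label $\ell(e)\in\{1,2,3,4\}$ to split the ``colour space'' $[n]\times[4]$ into eight blocks of size $n/2$, one obtains eight edge-disjoint rainbow perfect matchings whose union $\Gamma$ is an $8$-regular (multi)graph in which every true colour appears exactly four times. This is exactly the hypothesis of the Janson--Wormald theorem (which asserts the existence of a rainbow Hamilton cycle in a random $2r$-regular graph, $r\ge 4$, under such a colouring), so by contiguity of a union of perfect matchings with $G_{n,8}$ one gets the Hamilton cycle in one stroke --- no cycle-merging, no colour bookkeeping. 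You appear to have misread Janson--Wormald as a statement about cycle counts in unions of matchings; it is actually the tool that makes the merging problem disappear, provided you feed it enough matchings.

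For odd $n$ your delete-a-vertex plus P\'osa-rotation sketch is also different from (and vaguer than) the paper's argument: the paper \emph{contracts an edge} $e=\{x,y\}$ (rather than deleting a vertex), removes all edges of colour $c(e)$, applies the even case on the resulting $(n-1)$-vertex graph, and observes that the Hamilton cycle lifts back to $G$ with probability $1/2$. It then secures success by running this on $\omega/K$ nearly independent copies $\Gamma_1,\ldots,\Gamma_{\omega/K}$ of $G_{n,p'}$ inside $G$, which is what forces $m=\omega(n\log n)$.
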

The result for $n$ odd is surely an artifact of the proof and we conjecture the same result is true for $n$ odd or even.

Previous results in this area have concentrated on the existence of rainbow Hamilton
cycles. For example, Frieze and Loh \cite{FL} showed that $G_{n,m;\k}$
contains a rainbow hamilton cycle w.h.p. whenever $m\sim \frac12n\log
n$\footnote{We write $A_n\sim B_n$ if $A_n=(1+o(1))B_n$ as
  $n\to\infty$} and $\k\sim n$. This result is asymptotically optimal
in number of edges and colors. Theorem \ref{th2} resolves a question
posed at the end of this paper (up to a constant factor) about the number of edges needed when
we have a minimum number of colors available. 
Perarnau and Serra \cite{PS13} showed that a random coloring of the
complete bipartite graph $K_{n,n}$ with $n$ colors contains a rainbow perfect matching.
Erd\H{o}s and Spencer \cite{ES91} proved the existence of a rainbow perfect matching in the complete bipartite graph $K_{n,n}$ when no color can be used more than $(n-1)/16$ times.
\section{Outline of the paper}
The proof of Theorem \ref{th1} is derived directly from the proof in the landmark paper of Johansson, Kahn and Vu \cite{JKV08}.  They prove something more general, but one of their main results concerns the ``Schmidt-Shamir'' problem, viz. how many random (hyper-)edges are needed for a 3-uniform hypergraph to contain a perfect matching. In this context, a perfect matching of a 3-uniform hypergraph $H$ on $n$ vertices $V$ is a set of $n/3$ edges that together partition $V$.

There is a fairly natural relationship between rainbow matchings of $k$-uniform hypergraphs and perfect matchings of $(k+1)$-uniform hypergraphs. This was already exploited in Frieze \cite{F1}. The basic idea is to treat an edge $\set{u_1,u_2,\ldots,u_k}$ of color $c\in C$ as an edge $\set{u_1,u_2,\ldots,u_k,c}$ in a $(k+1)$-uniform hypergraph $H$ with vertices $V\cup C$ and edges in $\binom{V}{k}\times C$. Then, assuming that $|V|=k|C|$ we ask for a perfect matching in $H$. Here we would take $V=[kn]$ and $|C|=n$ and construct $H$ randomly. The ``fly in the ointment'' so to speak, is that we cannot have two distinct edges $\set{u_1,u_2,\ldots,u_k,c_i},i=1,2$. This seems like a minor technicality and in some sense it is. We have not been able to find a simple way of resolving this technicality, other than modifying the proof in \cite{JKV08}.

We slightly sharpen our focus and consider multi-partite hypergraphs.
Let $K_{n,k}$ be the complete $k$-partite, $k$-uniform hypergraph where each part has $n$ vertices. 
Its vertex set $V$ is the union of $k$ disjoint sets $U_1\cup U_2\cup\cdots\cup U_k$, each of size $n$.  We let the edge set of $K_{n,k}$ be $\cV = \cV_k = U_1\times\cdots\times U_k$. $HP_{n,m,k}$ is obtained by choosing $m$ random edges from $\cV$.

Our approach, taken from \cite{JKV08}, is to start with a random coloring of the complete $k$-partite hypergraph $K_{n,k}$. Denote this edge colored graph by $K^{(n)}_{n,k}$. We show in Section \ref{norm} that w.h.p. $K^{(n)}_{n,k}$ has a large number of rainbow perfect matchings. We then randomly delete edges one by one showing that w.h.p. the remaining graph $H_i$, after $i$ steps, still contains many rainbow perfect matchings. Here we need $i\leq N-Kn\log n$ where $N=n^k$ and $K$ is sufficiently large.

We let $\F_i$ denote the number of rainbow perfect matchings in $H_i$ and consider $\xi_i=1-\frac{\F_i}{\F_{i-1}}$. If we can control the sequence $(\xi_i)$ then we can control the number of rainbow perfect matchings in $H_i$. It is enough to control
$S_i=\sum_i\xi_i$. We will let $\w_i(e)$ denote the number of rainbow perfect matchings that contain a particular edge $e\in E_i$, the edge-set of $H_i$.  $S_i$ will be concentrated around its mean if we show that w.h.p. the maximum value of $\w_i(e)$ is only $O(1)$ times the average value of $\w_i(e)$ over $e\in E_i$. This is the event $\cB_i$ defined in \eqref{cBi}. Proving that $\cB_i$ occurs w.h.p. is the heart of the proof. 

In Section \ref{av2m} there is a switch from bounding the ratio of max to average to bounding the ratio of max to median. It is then shown that it is unlikely for the maximum to be more than twice the median. Entropy and symmetry play a significant role here and it is perhaps best to leave the reader to enjoy this clever set of ideas from \cite{JKV08} when he/she gets to them.

Once we have Theorem \ref{th1}, it is fairly straightforward to use the result of \cite{JW} to obtain Theorem \ref{th2}. This is done in Section \ref{Ham}.
\section{The number of rainbow perfect matchings in $K^{(n)}_{n,k}$}\label{norm}
To begin, we will show that the number of rainbow perfect matchings in $K^{(n)}_{n,k}$, with its edges randomly colored by $n$ colors is concentrated around its expected value.  
\begin{lemma}\label{startinglem} Let $\F(K^{(n)}_{n,k})$ represent the number of rainbow matchings of $K^{(n)}_{n,k}$. Then w.h.p.,
\[\F(K^{(n)}_{n,k}) \sim\frac{(n!)^{k}}{n^n}.\]
\end{lemma}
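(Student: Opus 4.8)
The plan is to compute the expectation of $\F(K^{(n)}_{n,k})$ exactly and then apply a second moment argument to get concentration. First note that the perfect matchings of the complete $k$-partite hypergraph $K_{n,k}$ are in bijection with $(k-1)$-tuples of permutations of $[n]$: fixing $U_1 = [n]$, a perfect matching is determined by bijections $\sigma_2,\ldots,\sigma_k : [n] \to [n]$ where the edge covering vertex $i \in U_1$ is $\{i, \sigma_2(i), \ldots, \sigma_k(i)\}$. Hence there are exactly $(n!)^{k-1}$ perfect matchings, and each consists of $n$ edges. For a fixed perfect matching $M$, the probability that $M$ is rainbow under the uniform random $n$-coloring is $\frac{n!}{n^n}$, since the $n$ edges get independent uniform colors from $[n]$ and we need all $n$ colors distinct. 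Therefore
\[
\E{\F(K^{(n)}_{n,k})} = (n!)^{k-1}\cdot\frac{n!}{n^n} = \frac{(n!)^k}{n^n}.
\]

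Next I would establish concentration via Chebyshev, i.e. show $\E{\F^2} = (1+o(1))\E{\F}^2$. Writing $\F = \sum_M \one\{M \text{ rainbow}\}$, we have $\E{\F^2} = \sum_{M,M'} \Pr{M \text{ and } M' \text{ both rainbow}}$. When $M$ and $M'$ share $j$ edges (so the symmetric difference involves $2(n-j)$ distinct edges, with $n-j$ from each), the event that both are rainbow is a constraint on $n + (n-j)$ independently colored edges: the $j$ common edges plus $n-j$ edges of $M$ must be rainbow (all colors distinct, forcing a bijection on that side), and independently the $n-j$ edges unique to $M'$ must receive exactly the $n-j$ colors missing from $M$'s palette restricted appropriately — more precisely, conditioned on $M$ being rainbow, $M'$ is rainbow iff its $n-j$ non-shared edges get the $n-j$ colors not used on the $j$ shared edges, in some order, giving conditional probability $\frac{(n-j)!}{n^{n-j}}$. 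The dominant term $j = n$ contributes $\E{\F}$, and the key estimate is that the total contribution from $j < n$ is $o(\E{\F}^2)$.

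The main obstacle is controlling this sum over the overlap pattern: one must count, for each $j$, the number of ordered pairs $(M, M')$ of perfect matchings sharing exactly $j$ edges, and show that the product of this count with $\frac{n!}{n^n}\cdot\frac{(n-j)!}{n^{n-j}}$ is negligible compared to $\E{\F}^2 = \frac{(n!)^{2k}}{n^{2n}}$. The number of pairs sharing at least $j$ edges is at most $(n!)^{k-1}\binom{n}{j}((n-j)!)^{k-1}$ (choose $M$, choose the $j$ shared edges among its $n$ edges, then choose the rest of $M'$ on the remaining coordinates). Plugging in, the ratio to $\E{\F}^2$ is bounded by $\sum_{j} \binom{n}{j}\frac{((n-j)!)^{k-1}(n-j)!\, n^n}{(n!)^k}$, and a routine application of Stirling shows the terms decay geometrically away from $j = n$ (the $n^n/n!$ gain is overwhelmed by the $(n!)^{k-1}/((n-j)!)^{k-1}$ loss once $k \ge 2$, with the case $j$ small requiring the crude bound $n! / n^n \le e^{-cn}$ to kill the $\binom{n}{j}$ factor). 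Since $k \ge 2$ this goes through; one then concludes by Chebyshev that $\F \sim \E{\F} = \frac{(n!)^k}{n^n}$ w.h.p. I expect the bookkeeping in the overlap sum — especially verifying the geometric decay uniformly in $k$ — to be the only delicate point, the rest being standard.
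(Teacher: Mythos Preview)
Your overall approach---compute the expectation exactly and then apply Chebyshev via a second-moment calculation organized by the number $j$ of shared edges---is precisely what the paper does. The expectation calculation is correct. However, there are two genuine problems in your execution of the second moment.

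First, you have the structure of the second moment backwards. The dominant contribution to $\E{\F^2}$ comes from $j=0$ (edge-disjoint pairs), which gives $\sim\E{\F}^2$, not from $j=n$, which contributes only $\E{\F}$. Your claim that ``the total contribution from $j<n$ is $o(\E{\F}^2)$'' would force $\E{\F^2}=\E{\F}+o(\E{\F}^2)=o(\E{\F}^2)$, a contradiction. What you actually need is that the contributions from $j\ge 1$ sum to $o(\E{\F}^2)$, with the $j=0$ term supplying the main $(1+o(1))\E{\F}^2$. Relatedly, your displayed ratio has an algebra slip: the power of $n$ should be $n^{j}$, not $n^{n}$; with $n^{j}$ the $j=0$ term equals $1$, as it must.

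Second, and more seriously, your crude upper bound on pairs sharing \emph{at least} $j$ edges is not sharp enough when $k=2$. With that bound the ratio to $\E{\F}^2$ at fixed $j$ is asymptotically $1/\bigl(j!\,n^{j(k-2)}\bigr)$; for $k\ge 3$ this sums to $1+o(1)$, but for $k=2$ it sums to $e$, yielding only $\E{\F^2}\le (e+o(1))\E{\F}^2$, from which Chebyshev cannot deliver $\F\sim\E{\F}$. The paper repairs this by counting pairs sharing \emph{exactly} $j$ edges via inclusion--exclusion: the number $N_\ell$ of matchings on $\ell$ points in each part that are edge-disjoint from a fixed one satisfies $N_\ell=(\ell!)^{k-1}\bigl(e^{-1}\mathbf{1}_{k=2}+\mathbf{1}_{k\ge 3}+o(1)\bigr)$, and for $k=2$ the derangement factor $e^{-1}$ exactly cancels the $e$ coming from the sum over $j$. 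You will need this refinement (or an equivalent sharpening) for the argument to go through at $k=2$.
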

\begin{proof}
Let $X$ be a random variable representing the number of rainbow matchings in $K^{(n)}_{n,k}$.
Then there are $(n!)^{k-1}$ distinct perfect matchings and each has probability $\frac{n!}{n^n}$ of being rainbow colored. Hence,
\begin{equation}\label{EX}
\E{X} = (n!)^{k-1}\times \frac{n!}{n^n}=\frac{(n!)^{k}}{n^n}.
\end{equation}
We use Chebyshev's Inequality to show that $X$ is concentrated around this value. It  is enough to show that 
\[\E{X^2} \le (1+o(1))\E{X}^2.\]

Given a fixed matching $M$ with $\ell$ edges, let $N_{\ell}$ represent the number of matchings covering the same vertex set as $M$ but are edge disjoint from $M$. 
Then inclusion-exclusion gives
\begin{align*}
N_\ell &= \sum_{i=0}^{\ell}(-1)^i\binom{\ell}{i}((\ell-i)!)^{k-1} \\
&=(\ell!)^{k-1}\sum_{i=0}^{\ell}\frac{(-1)^i}{i!}\bfrac{(\ell-i)!}{\ell!}^{k-2}.
\end{align*}
Now, suppose we have an integer sequence $\l = o(\sqrt{\ell})$ and $\l \to \infty$ with $\ell$. Then the Bonferroni inequalities tell us that
\begin{equation}
(\ell!)^{k-1}\sum_{i=0}^{2\l-1}\frac{(-1)^i}{i!\ell^{i(k-2)}}\of{1 +o(1)}\le N_\ell \le (\ell!)^{k-1}\sum_{i=0}^{2\l}\frac{(-1)^i}{i!\ell^{i(k-2)}}\of{1 +o(1)}.
\end{equation}
So as long as $\ell \to \infty$,
\[N_\ell = (\ell!)^{k-1}\of{e^{-1}1_{k=2}+1_{k\geq 3} + o(1)}.\]
Then
we have
\begin{align}
\E{X^2} &= \sum_{\ell=0}^{n}(n!)^{k-1}\binom{n}{\ell}N_{n-\ell}\frac{n!}{n^n}\frac{(n-\ell)!}{n^{n-\ell}}\nn\\
&=\E{X}\sum_{\ell=0}^{n}\frac{n!}{\ell!n^{n-\ell}}N_{n-\ell}\nn\\
&=\E{X}\sum_{\ell=0}^{\log n}\frac{n!}{\ell!n^{n-\ell}} ((n-\ell)!)^{k-1}
\of{e^{-1}1_{k=2}+1_{k\geq 3} + o(1)}\label{eq:mainterm}\\
&\quad+\E{X}\sum_{\ell=\log n}^{n}\frac{n!}{\ell!n^{n-\ell}}N_{n-\ell}\label{eq:smallterm}
\end{align}
We now bound \eqref{eq:mainterm} and \eqref{eq:smallterm} in turn.
We have that \eqref{eq:mainterm} is equal to
\begin{align*}
 &\E{X}^2\of{e^{-1}1_{k=2}+1_{k\geq 3} + o(1)}\sum_{\ell=0}^{\log n}\frac{1+o(1)}{\ell!n^{\ell(k-2)}}\\
&=\E{X}^2\of{e^{-1}1_{k=2}+1_{k\geq 3} + o(1)}\of{e1_{k=2}+1_{k\geq 3} + o(1)} = \E{X}^2(1+o(1))
\end{align*}
It remains to show that \eqref{eq:smallterm} is $o(\E{X}^2)$. We split this sum into 2 parts. First, using the trivial bound on $N_{n-\ell}\leq ((n-\ell)!)^{k-1}$, we have
\begin{align}
 \bfrac{1}{\E{X}^2}\E{X}\sum_{\ell=\log n}^{n-\log n}\frac{n!}{\ell!n^{n-\ell}}N_{n-\ell}&=\sum_{\ell=\log n}^{n-\log n}\frac{n^\ell}{\ell!(n!)^{k-1}}N_{n-\ell}\nn\\
&\le\sum_{\ell=\log n}^{n-\log n}\frac{n^\ell}{\ell!(n!)^{k-1}}((n-\ell)!)^{k-1}\label{214}.
\end{align}
Since in this range, both $\ell$ and $n-\ell$ approach infinity with $n$, we may apply Stirling's approximation to all factorials to get that for some constant $c$, \eqref{214} is at most
\begin{align*}
&c \sum_{\ell=\log n}^{n-\log n}n^\ell\cdot\frac{e^\ell}{\ell^{\ell+1/2}}\cdot\frac{e^{(k-1)n}}{n^{(k-1)(n+1/2)}}\cdot \frac{(n-\ell)^{(k-1)(n-\ell+1/2)}}{e^{(k-1)(n-\ell)}} \\
&\le c\cdot\sum_{\ell=\log n}^{n-\log n}\bfrac{e^k}{\ell n^{k-2}}^\ell \\
&\le cn\bfrac{e^k}{\log n}^{\log n} = o(1).
\end{align*}
For $\ell\geq n-\log n$ we bound $N_{n-\ell}\leq \binom{n}{\log n}((\log n)!)^{k-1}$ and then we have that for some constant $c'$, \eqref{214} is at most
$$\sum_{\ell=n-\log n}^{n}\frac{n^\ell}{\ell!(n!)^{k-1}}\binom{n}{\log n}((\log n)!)^{k-1}\le c'\log n \cdot \frac{e^n\cdot 2^n\cdot (\log n)^{k\log n}}{(n-\log n)!} = o(1).$$
\end{proof}
This completes the proof of Lemma \ref{startinglem}.

We will need the Chernoff bounds:
\begin{fact}\label{chernoff}
 Let $X$ be the sum of independent Bernoulli random variables and let $\E{X}=\m$. Then
\begin{align*}
\Pr{\abs{X-\m} >\eps \m} &\le 2e^{-\e^2\m/3}\qquad 0\leq\e\leq 1.\\
\Pr{X\geq \a \m}&\leq \bfrac{e}{\a}^{\a \m} \qquad \a>e.
\end{align*}
\end{fact}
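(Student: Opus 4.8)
This is the standard Chernoff bound, and the plan is to use the exponential-moment argument. Write $X=\sum_i X_i$ with the $X_i$ independent Bernoulli, $p_i=\Pr{X_i=1}$, so $\sum_i p_i=\m$. For any $t>0$, Markov's inequality applied to $e^{tX}$ together with independence gives
\[
\Pr{X\ge a}\ =\ \Pr{e^{tX}\ge e^{ta}}\ \le\ e^{-ta}\,\E{e^{tX}}\ =\ e^{-ta}\prod_i\E{e^{tX_i}}.
\]
Since $\E{e^{tX_i}}=1+p_i(e^t-1)\le\exp\of{p_i(e^t-1)}$ by the inequality $1+x\le e^x$, this yields $\Pr{X\ge a}\le\exp\of{-ta+(e^t-1)\m}$; the key feature is that the bound depends on the $p_i$ only through $\sum_i p_i=\m$, so the $X_i$ need not be identically distributed.

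For the upper tail I would minimise the exponent over $t$: the minimiser is $t=\ln(a/\m)$ (which is $>0$ since $a>\m$ in the ranges of interest), giving $\Pr{X\ge a}\le\of{e^{a/\m-1}(\m/a)^{a/\m}}^{\m}$. Taking $a=\a\m$ this equals $e^{-\m}\of{e/\a}^{\a\m}\le\of{e/\a}^{\a\m}$, the second displayed inequality. Taking instead $a=(1+\eps)\m$ gives $\Pr{X\ge(1+\eps)\m}\le\of{e^{\eps}/(1+\eps)^{1+\eps}}^{\m}$, and it remains to check the elementary inequality $(1+\eps)\ln(1+\eps)-\eps\ge\eps^2/3$ for $0\le\eps\le1$. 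I would do this by observing that $f(\eps):=(1+\eps)\ln(1+\eps)-\eps-\eps^2/3$ satisfies $f(0)=f'(0)=0$ and $f'(\eps)=\ln(1+\eps)-\tfrac{2}{3}\eps>0$ on $(0,1]$ (indeed $f''(\eps)=\tfrac{1}{1+\eps}-\tfrac{2}{3}$ is positive on $[0,\tfrac{1}{2})$ and negative on $(\tfrac{1}{2},1]$, while $f'(1)=\ln 2-\tfrac{2}{3}>0$), so $f\ge0$ on $[0,1]$; hence $\Pr{X\ge(1+\eps)\m}\le e^{-\eps^2\m/3}$.

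For the lower tail I would repeat the computation with $t<0$ (Markov on $e^{tX}$ still applies, the direction of the inequality reversing in $\Pr{X\le a}=\Pr{tX\ge ta}$): the optimal choice is $t=\ln(1-\eps)$, which gives $\Pr{X\le(1-\eps)\m}\le\of{e^{-\eps}/(1-\eps)^{1-\eps}}^{\m}$, and the elementary inequality now needed is $(1-\eps)\ln(1-\eps)+\eps\ge\eps^2/2$, which holds because $g(\eps):=(1-\eps)\ln(1-\eps)+\eps-\eps^2/2$ has $g(0)=g'(0)=0$ and $g''(\eps)=\eps/(1-\eps)\ge0$. Thus $\Pr{X\le(1-\eps)\m}\le e^{-\eps^2\m/2}\le e^{-\eps^2\m/3}$, and a union bound over the two tails yields $\Pr{\abs{X-\m}>\eps\m}\le e^{-\eps^2\m/3}+e^{-\eps^2\m/2}\le2e^{-\eps^2\m/3}$, the first displayed inequality.

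There is no real obstacle here: the only points requiring any care are phrasing the moment-generating-function estimate so that it depends on the $p_i$ only through $\m$ (this is what lets the $X_i$ be non-identically distributed), and the two one-variable calculus estimates that convert the sharp expressions $e^{\eps}/(1+\eps)^{1+\eps}$ and $e^{-\eps}/(1-\eps)^{1-\eps}$ into the clean bound involving $e^{-\eps^2\m/3}$. One could equally just cite a standard reference for the whole statement.
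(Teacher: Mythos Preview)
Your proof is correct and is the standard exponential-moment derivation of the Chernoff bounds. The paper does not actually prove this statement: it is recorded as a \emph{Fact} and quoted without proof, so your argument simply supplies what the paper takes as given.
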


\section{Proof of Theorem \ref{th1}}

Let the color set be $C$ (so $\abs{C} = n$) and let $\i:E(K_{n,k}) \to C$ be the random coloring of the edges. 
Let 
$$e_1,\ldots, e_N,\ N = n^k$$ 
be a random ordering of the edges of $K_{n,k}^{(n)}$, where we have used $\i$ to color the edges of $K_{n,k}$. Let $H_i = K_{n,k}^{(n)} - \set{e_1,\ldots,e_i}=(V,E_i)$. Here if $H=(V,E)$ is a hypergraph and $A\subseteq E,S\subseteq V,D\subseteq C$ then $H-A-S-D$ is the hypergraph on vertex set $V\setminus S$ with those edges in $E\setminus A$ that are disjoint from $S$ and do not use a color from $D$.

For a color $c\in C$, let $cd_{H_i}(c)=\abs{\set{e\in E_i: \i(e)=c}}$ be the number of edges of $H_i$ that have color $c$. 
\subsection{Tracking the number of rainbow matchings}
For an edge-colored hypergraph $H$, we let $\cF(H)$ denote the set of rainbow  perfect matchings of $H$ and we let 
$$\F(H)=|\cF(H)|.$$
Let $\cF_t=\cF(H_t)$ and $\F_t=|\cF_t|$ and then
$$\F_t=\F_0\frac{\F_1}{\F_0}\cdots\frac{\F_t}{\F_{t-1}}=\F_0
(1-\xi_1)\cdots(1-\xi_t)$$
or
$$\log\F_t=\log\F_0+\sum_{i=1}^t\log(1-\xi_i).$$
where, by Lemma \ref{startinglem}, we have that w.h.p.
\beq{10}
\log \F_0=\log \frac{(n!)^k}{n^{n}}(1+o(1))=(k-1) n\log n-c_1n,
\eeq
where 
\beq{c1}
0<c_1<k+1.
\eeq
We also have
\beq{11}
\E{\xi_i}=\g_i=\frac{n}{N-i+1}\leq \frac{1}{K\log n}.
\eeq
for $i\leq T=N-Kn\log n$.

Equation \eqref{11} becomes, with 
$$p_t=\frac{N-t}{N},$$
\beq{13}
\sum_{i=1}^t\E {\xi_i}=\sum_{i=1}^t\g_i=n
\brac{\log\frac{N}{N-t}+O\bfrac{1}{N-t}}=
n\brac{\log\frac{1}{p_t}+O\bfrac{1}{N-t}}
\eeq
using the fact that $\sum_{i=1}^N\frac{1}{i}=\log N+(Euler's\ constant)+O(1/N)$.

For $t=T$ this will give 
$$p_T=\frac{Kn\log n}{N}$$ 
and so for $t\leq T$ we have
\beq{c2}
\sum_{i=1}^t\g_i=-n \log p_t+o(n)\leq (k-1)n\log n.
\eeq

Our basic goal is to prove that if we define
$$\cA_t=\set{\log\F_t>\log\F_0-\sum_{i=1}^t\g_i-(c_1+1)n },$$
then
\beq{At}
\Pr{\bar{\cA}_t}\leq n^{-K^{1/3}/5}\text{ for }t\leq T.
\eeq
Given that we can make $K$ as large as we like, this implies Theorem \ref{th1}.

\subsection{Important properties}
We now define some properties that will be used in the proof.

If $e = (x_1,\ldots,x_k)$ and $c\in C$ then $\w_i(e,c)$ is the number of rainbow matchings of $H_i-\set{x_1,\ldots,x_k}$ that do not use an edge of color $c$. In particular if $e$ is an edge, then $\w_i(e,\i(e))$ is the number of rainbow matchings of $H_i$ which use the edge $e$.
We will usually shorten $\w_i(e,\i(e))$ to $\w_i(e)$ for $e\in E_i$.

In the following we have 
$$\w_i(E_i)=\sum_{e\in E_i}\w_i(e)\text{ and }\avg_{e\in E_i}\w_i(e)=\frac{\w_i(E_i)}{|E_i|}.$$

Let
\beq{cBi}
\cB_i = \set{\frac{\max_{e\in E_i} \w_i(e)}{\avg_{e\in E_i}\w_i(e)} \le L=K^{1/2}}
\eeq

\[\cR_i = \begin{Bmatrix}
\forall v\in V,\,\abs{d_{H_i}(v) - n^{k-1}p_i} \leq \e_1n^{k-1}p_i\\
\textrm{ and }\\
\forall c\in C,\, \abs{cd_{H_i}(c)-n^{k-1}p_i} \leq \e_1n^{k-1}p_i
\end{Bmatrix}
\]
where $\e_1=\frac{1}{K^{1/3}}$.

It will take most of the paper to show that $\cB_i$ occurs w.h.p. for all $i\leq T$, but $\cR_i$ is easily dealt with.
\subsubsection{Dealing with $\cR_i$}
First, we observe that $H_i$ is distributed as $HP^{(n)}_{n,N-i,k}$ and so for any hypergraph property $\cP$ we can write
\beq{GP}
\Pr{H_i\in \cP}\leb m\Pr{HP^{(n)}_{n,p_i,k}\in \cP},
\eeq
where $HP^{(n)}_{n,p_i,k}$ is the corresponding independent model in which each possible edge is included with probability $p_i$. 
This follows from $\Pr{HP^{(n)}_{n,p,k}\in \cP}\geq \binom{N}{m}p^m(1-p)^{N-m}$ where $p=m/N$. The notation $A\leb B$ is a substitute for $A=O(B)$.

Applying the Chernoff bound we see that for any $v,i$ we have 
\beq{eq1}
\Pr{|d_{H_i}(v)-n^{k-1}p_i|\geq \e_1n^{k-1}p_i}\leq 2ne^{-\e_1^2n^{k-1}p_i/3}\leq n^{-K^{1/3}/4}.
\eeq
For a fixed color $c$ we see that $cd_{H_i}(c)$ is distributed as the binomial $\Bin(N-i,1/n)$ which has expectation $n^{k-1}p_i$.
Applying the Chernoff bound once more we see then that for a fixed color $c$ we have
\beq{eq2}
\Pr{\abs{cd_{H_i}(c)-n^{k-1}p_i} \geq \e_1n^{k-1}p_i}\leq 2e^{-\e_1^2n^{k-1}p_i/3}\leq n^{-K^{1/3}/4}.
\eeq
This deals with $\cR_i,i\leq T$.

We now consider the first time $t\leq T$, if any, where $\cA_t$ fails. Then,
$$\bar{\cA}_t\cap\bigcap_{i<t}\cA_i\subseteq \sqbs{\bigcup_{i<t}\bar{\cR}_i}\cup\sqbs{\bigcup_{i<t}\cA_i\cR_i\bar{\cB}_i}
\cup\sqbs{\bar{\cA}_t\cap\bigcap_{i<t}(\cB_i\cR_i)}$$

We can therefore write
\beq{aim}
\P{\bar{\cA}_t\cap\bigcap_{i<t}\cA_i}<\sum_{i<t}\P{\bar{\cR}_i}+
\sum_{i<t}\P{\cA_i\cR_i\bar{\cB}_i}+
\P{\bar{\cA}_t\cap\bigcap_{i<t}(\cB_i\cR_i)}.
\eeq

\subsection{Concentration of the number of rainbow matchings}
We define
$$\cE_i=\set{\cB_j, \cR_j,j<i}.$$
We will first show that 
\beq{eq3}
\cE_i \implies \xi_i \leq \frac{1}{K^{1/2}\log n}.
\eeq
First we have
\begin{align*}
\w_{i-1}(E_{i-1})&= \sum_{e\in E_{i-1}}\sum_{F\in \cF_{i-1}}\bfo_{e\in F}\\
&=n\F_{i-1}.
\end{align*}
So for any $f\in E_{i-1}$,
\begin{align*}
\F_{i-1}&=\frac{1}{n}\w_{i-1}(E_{i-1})\\
&\ge \frac{1}{Ln}\abs{E_{i-1}}\max_{e\in E_{i-1}}\w_{i-1}(e)\\
&\ge \frac{N}{Ln}p_{i-1}\w_{i-1}(f).
\end{align*}
Hence, if the event $\cE_i$ holds then
$$\xi_i \le \max_{e\in E_{i-1}}\frac{\w_{i-1}(e)}{\F_{i-1}} \le \frac{Ln}{Np_{i-1}} \le \frac{L}{K\log n} \leq\frac{1}{K^{1/2}\log n},$$
confirming \eqref{eq3}.

Now define
\[Z_i =	\begin{cases}\xi_i - \g_i &\text{if $\cE_i$  holds}\\
0&\text{otherwise}
\end{cases}
\] 
and let
\[X_t = \sum_{i=1}^{t} Z_i.\]
We will show momentarily that 
\beq{Xt}
\Pr{X_t \ge n} \le e^{-\Omega(n)}.
\eeq
So if we do have $\cE_t$ for $t \le T$ (so that $X_t = \sum_{i=1}^{t}(\xi_i - \g_i)$) and $X_t \le n$ then
\[\sum_{i=1}^t \xi_i < \sum_{i=1}^t \g_i + n \le (k-1)n\log n+n\]
and so
\[\sum_{i=1}^t\xi_i^2 \le \frac{1}{K^{1/2}\log n}\cdot\sum_{i=1}^t\xi_i
\leq 2K^{-1/2}n.\]

So,
$$\log\F_t>\log\F_0-\sum_{i=1}^t(\xi_i+\xi_i^2)>\log\F_0-\sum_{i=1}^t\g_i-2n.$$
This deals with the third term in \eqref{aim}.
(If $\cE_t$ holds then $\cA_t$ holds with sufficient
probability).

Let us now verify \eqref{Xt}. Note that $|Z_i|\leq \frac{1}{K^{1/2}\log n}$ and that for any $h>0$
\beq{25}
\Pr{X_t\geq n}=\Pr{e^{h(Z_1+\cdots+Z_t)}\geq e^{hn}}\leq \E{e^{h(Z_1+\cdots+Z_t)}}e^{-hn}
\eeq
Now $Z_i=\xi_i-\g_i$ (whenever $\cE_i$ holds) and
$\E{\xi_i\mid\cE_i}=\g_i$. The conditioning does not affect the
expectation since we have the same expectation given any previous
history. 
Also $0\leq \xi_i\leq \e=\frac{1}{\log n}$ (whenever $\cE_i$ holds). 
So, with $h\leq 1$, by convexity
\begin{multline*}
\E{e^{hZ_i}}=\E{e^{hZ_i}\mid \cE_i}\Pr{\cE_i}+\E{e^{hZ_i}\mid\neg\cE_i}\Pr{\neg\cE_i}\leq\\
 e^{-h\g_i}\E{1-\frac{\xi_i}{\e}+\frac{\xi_i}{\e}e^{h\e}\bigg|\cE_i}\Pr{\cE_i}+\Pr{\neg\cE_i}\\
=e^{-h\g_i}\brac{1-\frac{\g_i}{\e}+\frac{\g_i}{\e}e^{h\e}}\Pr{\cE_i}+1-\Pr{\cE_i}\leq e^{h^2\e\g_i}.
\end{multline*}
So,
$$\E{e^{h(Z_1+\cdots+Z_t)}}\leq e^{h^2\e\sum_{i=1}^t\g_i}$$
and going back to \eqref{25} we get
$$\Pr{X_t\geq n}\leq e^{h^2\e\sum_{i=1}^t\g_i-hn}.$$
Now $\sum_{i=1}^t\g_i=O(n\log n)$ and so putting $h$
equal to a small enough positive constant makes the RHS of the above
less than $e^{-hn/2}$ and \eqref{Xt} follows.

\subsection{From average to median}\label{av2m}
If $I\subset[k]$, we write $\cV_I$ for the collection of
$\abs{I}$-sets of vertices using exactly one vertex from each of $U_i,
i\in I.$ For $r\le k$, we let $\cV_r=\bigcup_{|I|=r}\cV_I$.
Given $\bv \in \cV_{r}$, we define $I(\bv)$ by $\bv\in \cV_{I(\bv)}$ and $I^c(\bv) = [k]\sm I(\bv)$. 

Now for a multi-set $X\subseteq \mathbf{R}$ we let $\med{X}$, the median of
$X$, be the largest value $x\in X$ such that there are at least
$|X|/2$ elements of $X$ that are larger than $x$. Then define 
\[
\cC_i= \begin{Bmatrix}
\forall \bv\in \cV_{k-1}, c\in C, \max_{w\in U_{I^c(\bv)}}\w_i(\of{\bv,w},c) \le \max\set{\frac{\F_i}{2^kN}, 2\med\displaylimits_{u\in U_{I^c(\bv)}}\w_i(\of{\tbf v,u},c) }\\
\textrm{ and } \\
 \forall \tbf{v}\in \cV_k, \max_{c\in C}\w_i(\tbf v,c) \le \max\set{\frac{\F_i}{2^kN}, 2\med\displaylimits_{c\in C}\w_i(\bv,c) }.
\end{Bmatrix}
\]

We will prove
\begin{align}
\Pr{\cR_i\cC_i\bar{\cB_i}} &< n^{-K^{1/3}/4} \label{rcb}\\
\Pr{\cA_i\cR_i\bar{\cC_i}} & < n^{-K^{1/3}/4}\label{arc}.
\end{align}

Note that \eqref{rcb} and \eqref{arc} imply that
$$\Pr{\cA_i\cR_i\bar{\cB}_i}=\Pr{\cA_i\cR_i\bar{\cB}_i\cC_i}+\Pr{\cA_i\cR_i\bar{\cB}_i\bar{\cC}_i}\leq 2n^{-K^{1/3}/4}.$$
This deals with the middle term in \eqref{aim}.

\subsection{Proof of \eqref{rcb}}\label{secrcb}
First, we suppose that 
\beq{least}
\P{\cR_i\cC_i} \ge n^{-K^{1/3}/4},
\eeq 
otherwise \eqref{rcb} holds trivially.
For $\tbf{v}\in \cV_{k-1}$ and $c\in C$, we let $\psi_V(\bv,c) = \max_{w\in \cV_{I^c(\bv)}}\w\of{\of{\bv , w},c}$ and for $\bv\in \cV_k$, we let $\psi_C(\bv) = \max_{c\in C}\w(\bv,c).$ Let 
\beq{vwc}
\psi_0=\w(\bv',c')=\max_{\bv\in \cV_k}\max_{c\in C}\w(\bv,c).
\eeq
\begin{lemma}\label{RCBlemma}
Suppose that $B$ is such that $\psi_0 \ge 2^kB$ and that for each $\bv\in \cV_{k-1}, c\in C$ with $\psi_V(\bv,c) \ge B$, we have
\begin{equation}\label{psiV}\abs{\set{w\in \cV_{I^c(\bv)} : \w(\of{\bv,w},c)\ge \frac{1}{2}\psi_V(\bv,c)}} \ge \frac{n}{2}\end{equation}
and for all $\bv\in V_k$ with $\psi_C(\bv)\ge B$, we have
\begin{equation}\label{psiC}\abs{\set{c\in C : \w(\bv,c)\ge \frac{1}{2}\psi_C(\bv)}} \ge \frac{n}{2}.\end{equation}
Then we have
\begin{equation}\label{psiES}
\abs{\set{(\bv,c)\in \cV_k \times C\,:\,\w(\bv,c) \ge \frac{\psi_0}{2^{k+1}}}} \ge \frac{n^{k+1}}{2^{k+1}}
\end{equation}
\end{lemma}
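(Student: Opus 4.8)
The plan is a greedy ``spreading'' (peeling) argument: starting from the pair $(\bv',c')$ attaining $\psi_0$, I free one coordinate at a time, losing a factor $2$ in the $\w$-value and gaining a factor $n/2$ in the number of pairs at each step. Concretely, I will build sets $S_0,S_1,\dots,S_{k+1}\subseteq\cV_k\times C$ with $S_0=\{(\bv',c')\}$ satisfying, for every $j$, that $|S_j|\ge (n/2)^j$ and $\w(\bv,c)\ge \psi_0/2^j$ for all $(\bv,c)\in S_j$; I will also maintain, for $j\le k$, the bookkeeping invariant that every $(\bv,c)\in S_j$ has colour $c=c'$ and agrees with $\bv'$ in the vertex coordinates $j{+}1,\dots,k$. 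The case $j=k{+}1$ then gives $|S_{k+1}|\ge (n/2)^{k+1}=n^{k+1}/2^{k+1}$ with every value at least $\psi_0/2^{k+1}$, which is precisely \eqref{psiES}.

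For a vertex peeling $S_j\to S_{j+1}$ with $0\le j\le k-1$: given $(\bv,c')\in S_j$, let $\bz\in\cV_{k-1}$ be $\bv$ with its $(j{+}1)$st vertex coordinate deleted, so $I^c(\bz)=\{j{+}1\}$. Then $\psi_V(\bz,c')\ge \w(\bv,c')\ge \psi_0/2^j\ge \psi_0/2^k\ge B$ --- this is the only place the hypothesis $\psi_0\ge 2^kB$ enters, and with room to spare --- so \eqref{psiV} applies and produces at least $n/2$ vertices $w\in\cV_{I^c(\bz)}$ with $\w((\bz,w),c')\ge\tfrac12\psi_V(\bz,c')\ge\psi_0/2^{j+1}$. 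Take $S_{j+1}$ to be the set of all pairs $((\bz,w),c')$ so obtained as $(\bv,c')$ ranges over $S_j$. The value bound and the invariant are immediate since only coordinate $j{+}1$ was touched. For the count, two distinct elements of $S_j$ differ in some vertex coordinate among $1,\dots,j$, which is retained in $\bz$, so they contribute disjoint families of new pairs; as each element of $S_j$ contributes at least $n/2$ new pairs, $|S_{j+1}|\ge (n/2)|S_j|\ge (n/2)^{j+1}$.

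The colour peeling $S_k\to S_{k+1}$ is the same with \eqref{psiC} in place of \eqref{psiV}: for $(\bv,c')\in S_k$ we have $\psi_C(\bv)\ge\w(\bv,c')\ge\psi_0/2^k\ge B$, hence at least $n/2$ colours $c$ with $\w(\bv,c)\ge\tfrac12\psi_C(\bv)\ge\psi_0/2^{k+1}$. Since the colour is constant on $S_k$, distinct elements of $S_k$ carry distinct $\bv\in\cV_k$ and so contribute disjoint sets of pairs, giving $|S_{k+1}|\ge (n/2)|S_k|\ge (n/2)^{k+1}$, as required.

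I do not expect a real obstacle: the argument is essentially bookkeeping. The two points demanding care are (i) keeping the $\w$-value above $B$ through all $k{+}1$ steps, which is exactly what $\psi_0\ge 2^kB$ buys since the threshold at step $j$ is $\ge\psi_0/2^j\ge\psi_0/2^k\ge B$; and (ii) making sure the cardinalities actually multiply, i.e.\ that the new pairs generated by different elements of $S_j$ do not collide --- this is guaranteed by always peeling a coordinate that is still pinned to its value in $(\bv',c')$, which makes the map ``(old pair, new choice)$\,\mapsto\,$(new pair)'' injective.
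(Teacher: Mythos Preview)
Your proposal is correct and follows essentially the same approach as the paper's own proof: both start at the maximiser $(\bv',c')$ and peel the $k$ vertex coordinates one at a time using \eqref{psiV}, then peel the colour using \eqref{psiC}, losing a factor $2$ in the $\w$-value and gaining a factor $n/2$ in the count at each step. Your version is slightly more explicit about the injectivity bookkeeping (why the counts actually multiply), but the argument is the same.
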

 \begin{proof}
Suppose $\bv' = (v_1', \ldots, v_k'),c'$ are as in \eqref{vwc}. Then by \eqref{psiV}, there there
is a set $W_1\subset U_1$ of size at least $n/2$ such that if $w_1\in W_1$ then
$\w((w_1,v_2',\ldots,v_k'),c') \ge \frac{1}{2}\psi_V((v_2',\ldots,v_k'),c') =\frac12\psi_0\geq 2^{k-1}B$. For each
$w_1\in W_1$, since we have $\psi_V((w_1,v_3',\ldots,v_k')),c') \ge \frac12\psi_0\geq2^{k-1}B$, we may apply
\eqref{psiV} once more to find a set $W_2^{w_1}\subseteq U_2$ of size at least $n/2$
such that if $w_2\in W_2^{w_1}$ then $\w((w_1,w_2,v_3',\ldots,v_k'),c') \ge \frac12\psi_V((w_1,v_3',\ldots,v_k')),c')\ge \frac14\psi_0\ge 2^{k-2}B$. 

Continuing in this way, for every choice of $w_1\in W_1$, $w_2 \in W_2^{w_1}$, $w_3\in W_3^{w_1,w_2},\ldots,w_k\in W_k^{w_1,\ldots w_{k-1}}\subseteq U_k$, we have
$\w((w_1,\ldots,w_k),c')\ge \frac{1}{2^k}\psi_0 \ge B$. Thus every such choice of $w_1,\ldots, w_k$, we have
 $\psi_C((w_1,\ldots,w_k)) \ge\frac{1}{2^k}\psi_0\geq B$, so to
finish, we apply \eqref{psiC} to find a set $D^{w_1,\ldots,w_k}\subseteq C$ of size at
least $n/2$ such that if $d\in D^{w_1,\ldots,w_k}$ then 
\(\w((w_1,\ldots,w_k),d) \ge \frac{\psi_0}{2^{k+1}}.\) Since there are $n/2$ choices for vertices in each part and $n/2$ choices for colors, we have that the number of choices total is at least $\frac{n^{k+1}}{2^{k+1}}$ as desired.
\end{proof}
For $\bv \in \cV_k$, let $H_i^{\bv c}$ be the hypergraph $H_i$ with vertices in $\bv$ removed as well as all edges with color $c$.
Now let $\w_i(\bv, c)$ be the number of rainbow matchings in $H_i^{\bv c}$. 
Suppose that $\cC_i$ holds and let $B=\frac{\F_i}{2^kN}$. 
Note that $\psi_0\geq 2^kB$ else we would have $\psi_0 < \frac{\F_i}{N} < \avg_{e\in E_i}\w_i(e)$, contradiction.

So for all $\bv\in \cV_{k-1}$, $c\in C$ with $\psi_V(\bv,c) \ge B$, we have \[\max_{w\in \cV_{I^c(\bv)}}\w_i\of{(\bv,w),c} \le 2\med_{w\in \cV_{I^c(\bv)}}\w_i(\of{\bv,w},c).\]
This is condition \eqref{psiV}. Similarly, the second condition of $\cC_i$ gives us \eqref{psiC}. So we may conclude that
\begin{equation}\label{cubicnumber}
\abs{\set{(\bv,c)\in \cV_k\times C\,:\, \w_i(\bv, c) \ge \frac{1}{2^{k+1}}\psi_0}} = \frac{n^{k+1}}{2^{k+1}}.
\end{equation}

Let 
\begin{equation}\label{EStarDef}
 E_i^* := \set{e\in E_i\,:\, \w_i(e) \ge \frac{1}{2^{k+2}}\max_{e\in E_i}\w_i(e)}.
\end{equation} 
We will show that 
\begin{equation}\label{Estarbig}
\Pr{\abs{E_i^*} \le \frac{Np_i}{2^{2k+7}} \middle\vert \cR_i\cC_i}\le n^{-K^{1/3}}.
\end{equation}

Let $\g=\frac{1}{2^{k+3}}$. By equation \eqref{cubicnumber} there are $\g n$ vertices
in $X_1\subseteq U_1$ such that if $x_1\in X_1$ then there are $\g n$
choices for $c_1 \in C_1(x_1)\subseteq C$ such that there are $\g
n^{k-1}$ choices for  $\tbf{x} = (x_2,\ldots,x_k)\in
U_2\times\cdots\times U_k$, such that if $x_1\in X_1,\,c_1\in C_1(x_1)$ then
\begin{equation}\label{x1x2c1big}
\w_i(\of{x_1,\tbf{x}},c_1) > \frac{1}{2^{k+1}}\psi_0 \ge \frac{1}{2^{k+1}}\max_{e\in E_i}\w_i(e).
\end{equation}
Now fix $0\leq \ell\leq 2kn\log n$ and let $\La=2^\ell$. Fix a vertex
$x_1\in X_1$ and let 
$$A_\La(x_1)=\set{\tbf{x}\in \cV_{[2,k]},c_1\in C: \w_i(\of{x_1,\tbf{x}},c_1)\geq \La}$$
and let 
$$B_\La(x_1)=\set{\tbf{x}\in \cV_{[2,k]},c_1\in C: c_1=\i(x_1,\tbf{x})\text{ and }\w_i(\of{x_1,\tbf{x}},c_1)\geq \La}$$
Here $\La$ will be
an approximation to the random variable $\psi_0/2^{k+1}$. Using $\La$ in
place of $\psi_0/2^{k+1}$ reduces the conditioning. There are not too
many choices for $\Lambda$ and so we will be able to use the union bound
over $\La$.

Let $S,T$ denote disjoint subsets of $\set{x_1}\times \cV_{[2,k]}\times C$.
Note that without the conditioning $\cR_i\cC_i$ the event
$\set{S\subseteq A_\La,T\cap A_\La=\emptyset}$ will be independent of the
event 
\beq{event}
\bigcap_{(e,c)\in S}\set{e\in E_i,\i(e)=c}\cap\bigcap_{(e,c)\in
  T}\neg\set{e\in E_i,\i(e)=c}.
\eeq 
This is because $\w_i(\of{x_1,\tbf{x}},c_1)$ depends only on the
existence and color of edges $f$ where if $\tbf{x}=(x_2,x_3,\ldots,x_k)$,
$$\set{x_1,x_2,\ldots,x_k}\cap f=\emptyset.$$
If we work with the model $HP_{n,k,p_i}$ in place of $H_i$, without the 
conditioning, then $\E{|B_\La(x_1)|}=|A_\La|p_i/n$. Also, we can express $|B_\La(x_1)|$ as the sum of independent Bernoulli random variables, one for each possible value of $\tbf{x}$. The variable $Z$ corresponding to a fixed $\tbf{x}$ will be one iff there is a $c_1\in C$ such that $((x_1,\tbf{x}),c_1)\in B_\La(x_1)$. 

Hence, if $|A_\La(x_1)|\geq \D=\g^2 N$, then using Fact
\ref{chernoff} and equations \eqref{GP} and \eqref{least},
$$\Pr{|B_\La(x_1)|\leq \D p_i/2n\mid \cR_i\cC_i}\leq \frac{\Pr{|B_\La(x_1)|\leq \D p_i/2n}}{\Pr{\cR_i\cC_i}}\leq n^{K^{1/3}/4}e^{-\D p_i/12n}\leq n^{-\g^2K/20}.
$$
There are at most $n$ choices for $x_1$. The number of choices for
$\ell$ is $2kn\log n$ and for one of these we will have 
$2^\ell\leq \frac{1}{2^{k+1}}\max\w_i(E_i)\le 2^{\ell+1}$ and so with probability
$1-n^{2+o(1)-\g^2K/20}$ we have that for each choice of $x_1\in X_1$
there are $\g^2Np_i/2$ choices for $\tbf{x},c$ such that
$(e=(x_1,\tbf{x}),c=\i(x_1,\tbf{x}))\in B_\La(x_1)$ and
$\w_i(e,c)>\frac{1}{2^{k+2}}\max\w_i(E_i)$. 
Observe that we have $2^{k+2}$ in in place of $2^{k+1}$, because we will want the above to hold for a value of $\La$ where $\La\leq \max\w_i(E_i)\leq 2\La$.
This verifies \eqref{Estarbig} and we have
$$\frac{\sum_{e\in E_i}\w_i(e)}{\max\w_i(E_i)}\geq 
\frac{\sum_{e\in {E_i^*}}\w_i(e)}{\max\w_i(E_i)}
\geq \frac{|{E_i^*}|}{2^{k+2}}\geq \frac{Np_i}{2^{3k+9}}\geq \frac{|E_i|}{2^{3k+10}}$$
which implies property $\cB_i$ if $K$ is sufficiently large.\\

 \subsection{Proof of \eqref{arc}}
 Recall that for a discrete random variable $X$, the (base $e$) entropy $H(X)$, is defined by
 \[H(X) = \sum_{x}p_x\log\of{\frac{1}{p_x}}\]
where the sum ranges over possible values of $X$ and $p_x=\P{X=x}.$  
 
 The following lemma is proved in \cite{JKV08}.
 \begin{lemma}\label{JKV6.2}
  If $H(X) > 
  \log\of{\abs{S}} - M,M=O(1)$, then there are $a,b \in\range(\w)$ with $a\le b \le \r_Ma$ such that for $J = \w^{-1}\sqbs{a,b}$ we have
  \[\abs{J} \geq \s_M\abs{S}\] and \[\w(J) > 0.7\w(S).\]
Here we can take $\r_M=2^{4(M+\log 3)}$ and $\s_m=2^{-2M-2}$.
 \end{lemma}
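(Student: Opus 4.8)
I read the lemma with $X$ the random element of $S$ with $\Pr{X=s}=\w(s)/\w(S)$; after scaling $\w$ we may assume $\w(S)=1$, so that $\w$ is a probability mass function on $S$, and, writing $N:=|S|$, the hypothesis reads $H(X)>\log N-M$ --- the entropy is within an additive constant of its largest possible value $\log N$. I would exhibit the band directly. Fix constants $t_0=t_0(M)$ and $t_1=t_1(M)$, to be chosen, put
\[
J:=\Big\{s\in S:\ \tfrac{2^{-t_1}}{N}\le\w(s)\le\tfrac{2^{t_0}}{N}\Big\},\qquad a:=\min_{s\in J}\w(s),\quad b:=\max_{s\in J}\w(s).
\]
Then $a,b\in\range(\w)$, $J=\w^{-1}[a,b]$, and $b/a\le2^{t_0+t_1}=:\rho_M$. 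Moreover every point of $J$ has $\w(s)\le2^{t_0}/N$, so the moment we know $\w(J)>0.7$ we also get $|J|>0.7\,N\,2^{-t_0}=:\sigma_M N$ for nothing. So everything reduces to the two ``tail'' estimates that make $\w(J)$ large.

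The light tail, i.e.\ the mass carried by points with $\w(s)<2^{-t_1}/N$, is immediate: there are at most $N$ points in all, each of mass $<2^{-t_1}/N$, so this mass is $<2^{-t_1}$, which is below any prescribed constant once $t_1$ is a fixed constant. The heavy tail --- the mass $\beta$ on points with $\w(s)>2^{t_0}/N$ --- is the crux, and a crude Markov-type estimate is far too weak: one has to feed the near-maximality of $H(X)$ back in. Split $H(X)$ over the heavy set and its complement. On the heavy set each $\log(1/\w(s))<\log(N2^{-t_0})$ and there are fewer than $\beta N2^{-t_0}$ such points, so by concavity of $x\mapsto x\log(1/x)$ this part contributes at most $\beta\log N-\beta t_0\log2$ (the $\beta\log\beta$ terms cancel); the complement carries mass $1-\beta$ on at most $N$ points, contributing at most $(1-\beta)\log N+(1-\beta)\log\frac1{1-\beta}$. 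Adding and comparing with $H(X)>\log N-M$, the $\log N$'s cancel and one is left with $\beta t_0\log2<M+1$, i.e.\ $\beta<\frac{M+1}{t_0\log2}$.

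Choosing $t_1$ an absolute constant and $t_0$ a fixed multiple of $M$ so that both $2^{-t_1}$ and $\frac{M+1}{t_0\log2}$ lie below $0.14$ gives $\w(J)>1-0.14-0.14>0.7$, completing the construction with $\rho_M=2^{t_0+t_1}$ and $\sigma_M=0.7\cdot2^{-t_0}$, both of the shape $2^{\pm\Theta(M)}$. The genuine obstacle is the heavy-tail step: the entropy hypothesis pins $\beta$ down only like $M/t_0$ rather than exponentially in $t_0$ --- and this is essentially best possible, since a handful of very heavy atoms costs almost nothing in entropy --- so one is forced to take $t_0$ proportional to $M$. Getting the exact constants $\rho_M=2^{4(M+\log3)}$ and $\sigma_M=2^{-2M-2}$ stated in the lemma then amounts to doing this bookkeeping sharply (slicing by factors of $3$ rather than $2$, which is where the $\log3$ comes from, optimising the split points, and if necessary iterating), which I would not reproduce here; the light tail, the reduction to $[a,b]\subseteq\range(\w)$, and the bound $|J|\ge\sigma_M N$ are all routine.
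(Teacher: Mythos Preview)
The paper does not actually prove this lemma: immediately before the statement it says ``The following lemma is proved in \cite{JKV08}'' and gives no argument of its own. So there is no paper-side proof to compare against.

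Your argument is correct and is essentially the standard one. The interpretation of $X$ as chosen according to $\w/\w(S)$ is the intended one, the light-tail bound is trivial, and the heavy-tail step --- splitting $H(X)$ over the heavy set $A$ and its complement via $H(X)\le H(\beta)+\beta\log|A|+(1-\beta)\log N$ together with $|A|<\beta N2^{-t_0}$ --- is exactly right and gives $\beta t_0\log 2<M+O(1)$. Deducing $|J|\ge 0.7\cdot 2^{-t_0}N$ from $\w(J)>0.7$ and the upper bound on $\w(s)$ for $s\in J$ is also fine, as is the verification that $J=\w^{-1}[a,b]$ with $a,b\in\range(\w)$.

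The only gap, which you flag yourself, is that the constants you obtain are $\rho_M=2^{t_0+t_1}$ and $\sigma_M=0.7\cdot 2^{-t_0}$ with $t_0$ of order $10M$ rather than the sharper $\rho_M=2^{4(M+\log 3)}$, $\sigma_M=2^{-2M-2}$ quoted from \cite{JKV08}. For the use made of the lemma in this paper that is harmless: it is applied only with $M=c_1+3$, a fixed constant depending on $k$, and the proof downstream needs merely that $\rho$ and $\sigma$ are constants (they feed into inequalities that are absorbed by taking $K$ large). So your version, with constants of the shape $2^{\Theta(M)}$, is entirely adequate here.
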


 To prove \eqref{arc}, assume that we have $\cA_i$ and $\cR_i$ and that $\cC_i$ fails. Then we have two cases.

 Suppose $\bv \in \cV_{k-1}$, $x \in \cV_{I^c(\bv)}$, and $c\in C$. Let $H_i^{\bv xc}$ be the sub-graph of $H_i$ induced by
 $V\setminus\{\bv,x\}$ where all edges of color $c$ have been deleted.

 \subsubsection{Case 1} 
Suppose that $\cC_i$ fails because there exists $\bv  \in \cV_{k-1}$ and $c\in C$ such that \[\max_{\xi\in \cV_{I^c(\bv)}}\w_i((\bv,\xi),c)  > \max\set{\frac{\F_i}{2^kN}, 2\med_{\xi\in \cV_{I^c(\bv)}}\w_i((\bv,\xi),c)}.\] Let $x$ be the value of $\xi$ which maximizes $\w_i((\bv,\xi),c)$.
 For ease of notation, let us suppose that $\bv = (v_1,\ldots, v_{k-1}) \in U_1\times\cdots\times U_{k-1}$, so that $I^c(\bv) = \set{k}$. Let $y\in U_{k}\sm \set{x}$ be a vertex with 
\beq{EQ1} 
\w_i((\bv,y),c) \le \med_{x}\wiofp{\bv}{x}{c}
\eeq
 and \[h(y,H_i^{\bv xc}):= H(X(y,H_i^{\bv xc}))\] maximized subject to \eqref{EQ1}.
Here, $X(y,H_i^{\bv xc})$ is the (random) edge-color pair
 containing vertex $y$ in a uniformly random rainbow matching of
 $H_i^{\bv xc}$. Then  
$$\wiofp{\bv}{x}{c} \ge 2\med_u\wiofp{\bv}{u}{c} \ge 2\wiofp{\bv}{y}{c}.$$

We have, using \eqref{10} and \eqref{c1} and assuming $\cA_i$ that 
\beq{m1}
\log\F_i > (k-1)n\log n + n\log p_i - (c_1+1)n.
\eeq
$\F(H_i^{\bv xc})$ is the number of rainbow matchings of $H_i^{\bv xc}$.  
So,
 \beq{m4}
\log \F(H_i^{\bv xc}) = \log\wiofp{\bv}{x}{c} \ge (k-1)n\log n +
 n\log p_i - (c_1+2)n
\eeq
(by the assumption about $\bv,x,c$ and
 the failure of $\cC_i$, including $\wiofp{\bv}{x}{c} \ge
 \F_i/((2n)^{k})$).

Now a rainbow matching of $H_i^{\bv xc}$ is determined by the $\set{X(z,H_i^{\bv xc}):z\neq x}$. Let $M$ denote a uniform random rainbow matching of $H_i^{\bv xc}$. Sub-additivity of entropy then implies that
\beq{subadd}
H(M)=\log\F(H_i^{\bv xc}) \le \sum_{z\in U_k \sm\set{x}}h(z,H_i^{\bv xc}).
\eeq

By our choice of $y$, we have $h(z,H_i^{\bv xc}) \le h(y,H_i^{\bv xc})$ for at least half the $z$'s in $U_k\sm \set{x}$. Also, for all $z \in U_k\sm \set{x}$, we have
 \[h(z,H_i^{\bv xc}) \le \log d_{H_i^{\bv xc}}(z)\le
 \log\of{(1+\e_1)n^{k-1}p_i}.\]
Here we use the fact that ${\cal R}_i$ holds.

So,
\beq{m5}
\log\F(H_i^{\bv xc}) \le \frac{n}{2}\brac{  h(y,H_i^{\bv xc})+\log((1+\e_1)n^{k-1}p_i)}
\eeq 
and hence by combining \eqref{m4} and \eqref{m5} we get 
\begin{align}
h(y,H_i^{\bv xc}) &\ge \frac{2}{n}\log\F(H_i^{\bv xc}) - \log\of{(1+\e_1)n^{k-1}p_i} \nonumber\\
&\ge \frac{2}{n}\of{(k-1)n\log n +n\log p_i -(c_1+2)n}-(k-1)\log n - \log p_i -\e_1\nonumber\\
&= 2(k-1)\log n + 2\log p_i - (c_1+2) - (k-1)\log n - \log p_i-\e_1\nonumber \\
&\geq \log\of{d_{H_i^{\bv xc}}(y)} - (c_1+3).\label{m8}
\end{align}
To summarise what we have proved so far: If we have $\cA_i,\cR_i$ but
not $\cC_i$ then \eqref{m8} holds.

\medskip
Now for $i=1,\ldots,k-1$, let $W_i = U_i\sm\set{v_i}$ and $W = W_1\times\cdots\times W_{k-1}$. Let $L=C\sm\set{c}$ 
and for $(\tbf{z},c') \in W\times L$, let 
$\w_i'(\tbf{z},c')$ be the number of rainbow matchings of $H_i
-\set{\bv,\tbf{z},x,y}-\set{c,c'}$. We define $\w_y(\of{\tbf{z}, y},c')$ on 
\[W_y := \set{(\of{\tbf{z}, y},c') \,:\, \tbf{z}\in W\, ,\,c'\in L,\, \of{\tbf{z}, y}\in E_i,\,\i(\of{\tbf{z},y}) = c'}\]
 as $\w'_i(\tbf{z},c')$ and define 
 $\w_x(\of{\tbf{z}, x},c')$ on 
\[W_x := \set{(\of{\tbf{z}, x},c') \,:\, \tbf{z}\in W\, ,\,c'\in L,\, \of{\tbf{z}, x}\in E_i,\,\i(\of{\tbf{z},x}) = c'}\]
 as $\w'_i(\tbf{z},c')$. Then the random variable $X(y, H_i^{\bv xc})$,
 which is the edge-color pair containing $y$ in a random rainbow matching of $H_i^{\bv xc}$, is chosen according to
 $\w_y$ and $X(x,H_i^{\bv yc})$  which is the edge-color pair containing $x$ in a random rainbow matching of $H_i^{\bv yc}$, is chosen according to $\w_x$.
 
 Equation \eqref{m8} tells us that $H(X(y,H_i^{\bv xc}))=h(y,H_i^{\bv xc})  \ge
 \log\abs{W_y} - (c_1+3)$. 
We may therefore apply Lemma \ref{JKV6.2} to conclude
that there exist $a\le b\le \r a,\,\r=\r_{c_1+3}$ and a set $J\seq W_y$ with $\abs{J}
\geq \s\abs{W_y} \geq (1-\e_1)\s n^{k-1}p_i,\,\s=\s_{c_1+3}$ such that $\w_y(J) \ge 0.7 \w_y(W_y)$ and
$J = \w_y^{-1}([a,b]).$

We also let $J' := \w_{x}^{-1}([a,b])$ and
note that 
\[\w_x(J') \le \w_x(W_x)=\wiofp{\bv}{y}{c} \le .5\wiofp{\bv}{x}{c}\] while on the other hand
\beq{hand}
\w_y(J) \ge 0.7\wiofp{\bv}{x}{c}\geq 1.4\w_x(J').
\eeq
We will condition on $H_i[V\sm\set{\bv,x,y}]$ and denote the conditioning
by $\cE_1$ i.e. we will fix the edges and edge colors of this subgraph
of $H_i$. 

Next enumerate 
$$\set{(\of{\tbf{z},y},c'):\F(H_i-\set{\bv,\tbf{z},x,y}-\set{c,c'})\in
  [a,b]}=\set{(\of{\tbf{z}_j,y},c_j),\,j=1,2,\ldots,\La}.$$ 

\begin{remark}\label{rem1}
At this point we have a small technical problem. To estimate a probability below, we need to drop the conditioning $\cA_i\cR_i\bar{\cC}_i$ and then later compensate by inflating our estimates by $1/\Pr{\cA_i\cR_i\bar{\cC}_i}$. The existence of $a,b$ depends on this conditioning and we need to deal with this fact. We tackle this as we did in Section \ref{secrcb} with respect to $\ell$ and $\La$. So we will consider pairs of integers $1\leq \l\leq \m\leq \l+\log_2\r\leq 2n^2$. Then for some pair $\l,\m$ we will find $2^\l\leq a\leq b\leq 2^\m$. It is legitimate in the argument to replace $a$ by $2^\l$ and $b$ by $2^\m$ and in the analysis below consider $a,b$ as fixed, independent of $H_i$. We can then inflate our estimates of probabilities by $O(n^2)$ to account for the number of possible choices for $\l,\m$. 
\end{remark}
We define the events
$$\cD_{e,\d}=\set{e\in
  E_i,\,\i(e) = \d}.$$
For the moment replace $H_i$ by $HP_{n,k,p_i}$.
We note that the event $\F(H_i-\set{\bv,\tbf{z}_j,x,y}-\set{c,c_j})\in[a,b]$
does not depend on the occurrence or otherwise of
$\cD_{(\tbf{z}_j,y),c_j}$ for any $k$. Hence, given\\
$\set{((\tbf{z}_j,y),c_j),\,j=1,2,\ldots,\La}$ we find that without
conditioning on $\cA_i\cR_i\bar{\cC}_i$, $|J|$ is  distributed as the sum of
independent Bernoulli random variables, as in \eqref{event}.
Note also that $\cR_i$ implies that $|W_y|\geq (1-\e_1)n^{k-1}p_i$. We can assume that $\Pr{\cA_i\cR_i\bar{\cC}_i}\geq
n^{-K^{1/3}/4}$, else we have proved \eqref{arc} by default. (We have extra conditioning $\cE_1$, but this is
independent of the $\cD_{e,\d}$).  Therefore, using Fact \ref{chernoff}, 
$$1=\Pr{|J|\geq (1-\e_1)\s n^{k-1}p_i\mid \cA_i\cR_i\bar{\cC}_i\cE_1}\leq
n^{K^{1/3}/4}\bfrac{2e\La}{(1-\e_1)\s N}^{(1-\e_1)\s n^{k-1}p_i}.$$

It follows that for $K$ sufficiently large, we have
\beq{p1}
\La\geq \frac{\s N}{10}.
\eeq
Then let
$$\G_j=H_i-\set{\bv,\tbf{z}_j,x,y}-\set{c,c_j}.$$
Note that the $\F(\G_j)=\w_i'(\tbf{z}_j,c_j)$ are
completely determined by the conditioning $\cE_1$.

Then   let
\begin{align}
\w_y(J)&=\sum_{\tbf{z}\in W}1_{\set{\tbf{z},y}\in E_i} \sum_{j:\tbf{z}_j=\tbf{z}}\F(\G_j)\cdot 1_{\i({(\tbf{z}_j, y))=c_j}}\label{p10}\\
\w_x(J')&=\sum_{\tbf{z}\in W}1_{\set{\tbf{z},x}\in
  E_i}\sum_{j:\tbf{z}_j=\tbf{z}}\F(\G_j)\cdot 1_{\i({\set{\tbf{z}_j, x})=c_j}}\label{p20}
\end{align}
Let 
\begin{align*}
X_{\tbf{z}}&=\sum_{j:\tbf{z}_j=\tbf{z}}\F(\G_j)\cdot 1_{\i({\set{\tbf{z}_j, x}=c_j}}.\\
Y_{\tbf{z}}&=\sum_{j:\tbf{z}_j=\tbf{z}}\F(\G_j)\cdot 1_{\i({(\tbf{z}_j, y)=c_j}}.
\end{align*}
Note that $X_\tbf{z},Y_\tbf{z}\leq b$.

We have
\begin{align*}
Z_y&=\frac{\w_y(J)}{b}=\sum_{\tbf{z}\in W}1_{\set{\tbf{z},y}\in E_i}\frac{Y_\tbf{z}}{b}\\ Z_x&=\frac{\w_x(J')}{b}=\sum_{\tbf{z}\in W}1_{\set{\tbf{z},x}\in E_i}\frac{X_\tbf{z}}{b}
\end{align*}

It follows directly
from the expressions \eqref{p10}, \eqref{p20} that $Z_y$ and
$Z_x$ are both equal to the sum of (conditionally) independent random
variables, each bounded between 0 and 1. Furthermore, we see from
\eqref{p10}, \eqref{p20} that
\beq{p30}
\E{Z_y\mid \cE_1}=\E{Z_x\mid \cE_1}.
\eeq

What we have to show now is that we can assume that this (conditional) expectation is large.

Let 
$$L_\bz=\set{j:\bz_j=\bz}$$
and
$$W'=\set{\bz\in W:|L_\bz|\geq \g n}$$
where $\g=\s /20$.

Note that 
$$\bz\in W'\text{ implies that }\E{Y_\bz\mid\cE_1}\geq a|L_{\bz}|n^{-1}\geq a\g.$$
We have
$$|L_\bz|\leq n\text{ and }\sum_\bz|L_\bz|=\La.$$
We deduce that
$$|W'|n+\g n(n^{k-1}-|W'|)\geq \La\geq \frac{\s N}{10}.$$
Therefore 
$$|W'|\geq \frac{\s -10\g}{10(1-\g)}n^{k-1}\geq \frac{\s  n^{k-1}}{20}.$$
Hence,
$$\E{Z_y\mid \cE_1}\geq |W'|p_i\times \frac{a\g}{b}\geq \frac{K\s \log n}{20\r }.$$

Now, Hoeffding's theorem implies concentration of $Z_y$ around its (conditional) mean i.e. for arbitrarily small constant $\e$ and for large enough $K$,
$$\Pr{|Z_y-\E{Z_y\mid\cE_1}|\geq \e\E{Z_y\mid\cE_1}\mid \cE_1}\leq
n^{-dK},$$
for some $d=d(k)$.
 
The same holds for $Z_x$.
But this together with \eqref{p30} contradicts \eqref{hand}.
This completes the proof of Case 1 of \eqref{arc}. We should of course multiply all probability upper by bounds by $O(n^2)$ to account for Remark \ref{rem1}, and there is ample room for this.
\subsubsection{Case 2} 
Suppose that $\cC_i$ fails because there are vertices $\bv = (v_1,\ldots,v_k) \in \cV_k$ such that
\[ \max_{d\in C}\w_i\of{\bv,d} > \max\set{\frac{\F_i}{(2n)^k},\, 2\med_{d\in C}\w_i\of{\bv,d}}. \]
 Let $c$ be the color that maximizes $\w_i\of{\bv,d}$. Let $c^* \in  C\sm \set{c}$ be a color with $\w_i\of{\bv,c^*} \le \med_{c}\w_i\of{\bv,c}$ and 
 \[h(c^*,H_i^{\bv c}):= H(X(c^*,H_i^{\bv c}))\]
 maximized subject to this constraint. Similarly to Case 1, 
 $X(c^*, H_i^{\bv c})$ denotes the edge-color pair using the color $c^*$ in a uniformly random rainbow matching of $H_i^{\bv c}$.
 Then we can show as before that  
\beq{m2}
h(c^*,H_i^{\bv c}) \ge \log\of{cd_{H_i^{\bv c}}(c^*)} - (c_1+3).
\eeq
Indeed, we have
\beq{m3}
\w_i\of{\bv,c} \ge 2\med_d\w_i\of{\bv,d} \ge 2\w_i\of{\bv,c^*}.
\eeq 
We have \eqref{m1} and so if
$\F(H_i^{\bv c})$ is the number of rainbow matchings of $H_i^{\bv c}$,
 \beq{m6}
\log \F(H_i^{\bv c}) = \log\w_i\of{\bv,c} \ge (k-1)n\log n +
 n\log p_i -(c_1+2)n 
\eeq
(by the assumption about $v,w,c$ and the failure of $\cC_i$, including $\wiof{v}{w}{c} \ge
 \F_i/((2n)^k)$).

Now, as in \eqref{subadd},
 \[\log\F(H_i^{\bv c}) \le \sum_{d\in C\sm \set{c}}h(z,H_i^{\bv d}).\]

 By our choice of $c^*$, we have $h(d,H_i^{\bv c}) \le h(c^*,H_i^{\bv c})$ for at least half the $d$'s in $C\sm \set{c}$. Also, for all $d \in C\sm \set{c}$, we have
 \[h(d,H_i^{\bv c}) \le \log cd_{H_i^{\bv c}}(d)\le \log\of{(1+\e_1)n^{k-1}p_i}.\]
So 
\beq{m7}
\log\F(H_i^{\bv c}) \le\frac{n}{2}
  h(y,H_i^{\bv c})+ \frac{n}{2}\log((1+\e_1))n^{k-1}p_i)
\eeq
and hence by combining \eqref{m6} and \eqref{m7} we get 
\eqref{m2}, just as we obtained \eqref{m8} from \eqref{m4} and \eqref{m5}.
 
Now for $i=1,\ldots,k$, we let $W_i = U_i\sm \set{v_i}$ and $W = W_1\times\cdots\times W_k$. We let $L=C\sm\set{c,c^*}$ 
and for $\bz = (z_1,\ldots,z_k) \in W$, let 
$\w_i'(\bz)$ be the number of rainbow matchings of $H_i - \set{\bv,\bz}$ which do not use $c^*$ or $c$. Then define $\w_{c^*}(\bz)$ on 
\[W_{c^*} := \set{\bz\in W:\  \bz\in E_i,\,\i(\bz) = c^*}\]
 as $\w'_i(\bz)$ and define $\w_c(\bz)$ on 
 \[W_c:=\set{\bz\in W:\ \bz\in E_i,\,\i(\bz) = c}\]
 as $\w'_i(\bz)$. Then the random variable $X_{c^*}=X(c^*, H_i^{\bv c})$,
 which is the edge of color $c^*$ in a random rainbow matching of $H_i^{\bv c}$, is chosen according to $\w_{c^*}$ and $X_c=X(c,H_i^{\bv c^*})$  which is the edge of color $c$ in a random rainbow matching of $H_i^{\bv c^*}$ is chosen according to $\w_c$.
 
Equation \eqref{m2} tells us that $H(X_{c^*})  \ge
 \log\abs{W_{c^*}} - (c_1+3)$. Therefore we may apply Lemma \ref{JKV6.2} to conclude
that there exist $\a\le \b\le \r \a$ and a set $J\seq W_{c^*}$ with $\abs{J} \ge
\s \abs{W_{c^*}} \geq (1-\e_1)\s n^{k-1}p_i$ such that $\w_{c^*}(J) \ge 0.7 \w_{c^*}(W_{c^*})=0.7\w_i\of{\bv,c}$ and
$J = \w_{c^*}^{-1}([\a,\b]).$ We also let $J' := \w_c^{-1}([\a,\b])$ and
note that 
\[\w_c(J') \le \w_c(W_c)=\w_i\of{\bv,c^*} \le .5\w_i\of{\bv,c}\] while on the other hand
\beq{hand0}
\w_{c^*}(J) \ge 0.7\w_i\of{\bv,c}\geq 1.4\w_c(J').
\eeq
Now let $H_i$ denote the graph induced by the edges $\tbf{e}\in W$ for which 
$\i(\tbf{e})\neq c^*,c$. Fix $H_i$ and let $F_i=W\sm E(H_i)$. 

Next enumerate 
$$\Psi=\set{\bz\in F_i:\F(H_i-\set{\bv, \bz}-\set{c^*,c})\in
  [\a,\b]}=\set{\bz_j,\,j=1,2,\ldots,\La}.$$ 
Here we can proceed as indicated in Remark \ref{rem1} and treat $\a,\b$ as constants.

Suppose that we replace $H_i$ by $HP_{n,k,p_i}$. In this case, 
$\Psi$ is determined by $H_i$ and is independent of the events $\bz_j\in E_i,\i(\bz_j)\in\set{c,c^*}$. It follows that if we omit the conditioning $\cA_i\cR_i\bar{\cC}_i$ then $|W_{c^*}|$ is distributed as $Bin(\La,p_i/n)$. We still have the conditioning $\cA_i\cR_i\bar{\cC}_i$ but we can argue as before that \eqref{p1} holds.

Then with 
$$\G_j=H_i-\set{v,w,x_j,y_j}-\set{c,c^*}$$
(i.e. the graph induced by vertices $V\setminus \set{\bv,\bz_j}$,
not including edges of color $c,c^*$),
we have
\begin{align}
\w_{c^*}(J)&=\sum_{j=1}^\La\F(\G_j)1_{\bz_j\in
  E_i,\i(\bz_j)=c^*}\label{p1a}\\
\w_c(J')&=\sum_{j=1}^\La\F(\G_j)1_{\bz_j\in
  E_i,\i(\bz_j)=c}\label{p2a}
\end{align}
We have already observed the conditioning on $H_i$ means that the $\F(\G_j)$ are independent of the $1_{\bz_j\in  E_i},1_{\i({\bz_j)=c^*}},1_{\i({\bz_j)=c}}$. Thus we may condition on the values of the $\F(\G_j)$.

  It follows directly
from the expressions \eqref{p1a}, \eqref{p2a} that $Z_{c^*}=\w_{c^*}(J)/\b$ and
$Z_c=\w_c(J')/\b$ are both equal to the sum of independent random
variables, each bounded between $\a/\b$ and 1. Furthermore, we see from
\eqref{p1a}, \eqref{p2a} that
\beq{p3}
\E{Z_c\mid \cE_1}=\E{Z_{c^*}\mid \cE_1}.
\eeq

We can argue as before that $\La\geq \s N/10$. Then note that
$$\E{Z_{c^*}\mid \cE_1}\geq \frac{\a \La p _i}{n\b}\geq \frac{K\s \log n}{10\r }.$$

Now, Hoeffding's theorem implies concentration of $Z_{c^*}$ around its (conditional) mean i.e. for arbitrarily small constant $\e$ and for large enough $K$,
$$\Pr{|Z_{c^*}-\E{Z_{c^*}\mid\cE_1}|\geq \e\E{Z_{c^*}\mid\cE_1}\mid \cE_1}\leq
n^{-d'K},$$ 
for some $d'=d'(k)$.

The same holds for $Z_c$.
But this together with \eqref{p3} contradicts \eqref{hand}.
This completes the proof of Case 2 of \eqref{arc}, as well the proof of Theorem \ref{th1}.

\section{Proof of Theorem \ref{th2}}\label{Ham}
Janson and Wormald \cite{JW} proved the following theorem.
\begin{theorem}\label{JW}
Let $G=G_{n,2r},\,4\leq r=O(1)$ be a random $2r$-regular graph with vertex set $[n]$.
Suppose that the edges of $G$ are randomly colored with $n$ colors so that each color appears exactly $r$ times. Then w.h.p. $G$ contains a rainbow Hamilton cycle.
\end{theorem}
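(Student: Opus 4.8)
The plan is to prove this by the small subgraph conditioning method of Robinson and Wormald, in the form due to Janson. Work in a coloured configuration (pairing) model: give each vertex $2r$ points, take a uniformly random perfect matching of the $2rn$ points to obtain a $2r$-regular multigraph $G^{*}$, and, independently, colour its $rn$ edges by a uniformly random colouring in which each colour is used exactly $r$ times. Since $r=O(1)$, $G^{*}$ is simple with probability bounded away from $0$, and the conditional law of $(G^{*},\text{colouring})$ given simplicity is exactly that of $G_{n,2r}$ equipped with a uniform balanced colouring; hence it suffices to prove the statement in the pairing model. Let $Z$ count the rainbow Hamilton cycles of $G^{*}$, and note that a rainbow Hamilton cycle uses exactly one edge of each colour.

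\textbf{First moment.} There are $(n-1)!/2$ Hamilton cycles on $[n]$, each occurring as a subgraph of $G^{*}$ with the same probability $p_{HC}$ (the colouring being independent of $G^{*}$), computed in the usual way in the pairing model; and a fixed $n$-edge Hamilton cycle is rainbow with probability $n!\,((r-1)n)!\,r^{n}/(rn)! = \Theta(\sqrt n)\,\big((r-1)/r\big)^{(r-1)n}$. Multiplying, $\E{Z}=\frac{(n-1)!}{2}\,p_{HC}\cdot\frac{n!\,((r-1)n)!\,r^{n}}{(rn)!}$, and the hypothesis $r\ge 4$ is what guarantees that this tends to infinity (and that the variance estimate below goes through).

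\textbf{Small subgraphs and mixed moments.} For $j\ge 3$ and each colour pattern $\tau$ on a $j$-cycle (the partition recording which of its edges share colours; for fixed $j$ all but $O(1)$ of the $j$-cycles are rainbow), let $X_{j,\tau}$ be the number of $j$-cycles of $G^{*}$ of pattern $\tau$. Standard pairing-model moment computations, now tracking colours, give joint convergence of the $X_{j,\tau}$ to independent Poisson variables with explicit means $\lambda_{j,\tau}$. Computing the joint factorial moments $\E{Z\prod_{j,\tau}(X_{j,\tau})_{k_{j,\tau}}}$ by enumerating rainbow Hamilton cycles that contain or avoid prescribed short cycles then yields $\E{Z\,(X_{j,\tau})_{k}}/(\E{Z}\,\lambda_{j,\tau}^{k})\to(1+\delta_{j,\tau})^{k}$ for explicit constants $\delta_{j,\tau}$.

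\textbf{Second moment and conclusion.} One computes $\E{Z^{2}}=\sum_{H_1,H_2}\Pr{H_1,H_2\subseteq G^{*}\text{ and both rainbow}}$ by classifying the ordered pairs $(H_1,H_2)$ according to the structure of $H_1\cap H_2$ (a vertex-disjoint union of paths) and according to how the one-edge-per-colour constraints of $H_1$ and $H_2$ interact, and verifies $\E{Z^{2}}/\E{Z}^{2}\to\exp\!\big(\sum_{j,\tau}\lambda_{j,\tau}\delta_{j,\tau}^{2}\big)<\infty$. Janson's small subgraph conditioning theorem then gives $Z>0$ \whp, which is the assertion. The main obstacle is this last second-moment estimate: one must simultaneously control the overlap of two Hamilton cycles and the interaction of their two colourings, showing that pairs with large overlap contribute negligibly and that the remaining contribution matches the short-cycle correction factor $\exp(\sum_{j,\tau}\lambda_{j,\tau}\delta_{j,\tau}^{2})$ --- this bookkeeping of colours on top of the classical Robinson--Wormald computation is the genuinely new difficulty.
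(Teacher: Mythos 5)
First, a point of comparison: the paper does not prove this statement at all --- it is Theorem \ref{JW} precisely because it is imported verbatim from Janson and Wormald \cite{JW} and then applied in Section \ref{Ham}. So there is no internal proof to measure you against; what you have written is, in outline, a reconstruction of the Janson--Wormald argument itself (coloured pairing model, first moment, small subgraph conditioning). Your reduction to the pairing model and your first moment are correct: the rainbow probability $n!\,((r-1)n)!\,r^n/(rn)! = \Theta(\sqrt n)\,((r-1)/r)^{(r-1)n}$ is right, and combining it with the expected number of Hamilton cycles in the pairing model gives exponential growth rate $(2r-1)((r-1)/r)^{2(r-1)}$, which exceeds $1$ exactly when $r\ge 4$, so the role you assign to the hypothesis $r\ge 4$ is accurate.

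The gap is in the conditioning family, and it is not merely a matter of unexecuted bookkeeping. For fixed $j$, the expected number of $j$-cycles whose pattern $\tau$ has a repeated colour is $O(1/n)$, so all your non-rainbow variables $X_{j,\tau}$ vanish asymptotically and the family degenerates to the ordinary short-cycle counts of the uncoloured graph. But the colouring creates its own bounded-mean Poisson fluctuations that cycle counts cannot see: for instance the number $Y$ of pairs of equal-coloured edges sharing a vertex has expectation about $n\binom{2r}{2}\cdot\frac{r-1}{rn}=(2r-1)(r-1)=\Theta(1)$, and conditioning on a fixed Hamilton cycle being present and rainbow changes $\E{Y}$ by a constant factor (the pair at a vertex cannot be the two cycle edges there, an event of probability about $1/\binom{2r}{2}$, and the residual colour multiplicities drop from $r$ to $r-1$, altering the collision probabilities for the remaining pairs), so its $\delta$ is nonzero; equal-coloured edges joined by a single edge form another such family with constant mean. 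Consequently $\E{Z^2}/\E{Z}^2$ does \emph{not} converge to $\exp(\sum_{j,\tau}\lambda_{j,\tau}\delta_{j,\tau}^2)$ over the family you propose --- the left side carries the extra factors coming from these colour-collision configurations --- and Janson's small subgraph conditioning theorem cannot be invoked as you state it. One must enlarge the conditioning family to include these local colour-repeat structures, compute their $\lambda$'s and $\delta$'s alongside the cycle counts, and verify the variance identity for the enlarged family; this is exactly the additional bookkeeping carried out in \cite{JW}, and together with the second-moment overlap analysis it constitutes the real content of the theorem, which your sketch asserts rather than proves.
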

Suppose then that we have $G=G^{(n)}_{n,m}$ where $n=2\n$ is even and
$m=Kn\log n$ where $K$ is sufficiently large. We randomly assign an integer $\ell(e)\in \set{1,2,3,4}$ to each edge. We then randomly partition the set $[n]\times [4]$ into 8 sets $C_1,C_2,\ldots,C_8$ of size $\n$. We then partition the edges of $G$ into 8 sets $E_1,E_2,\ldots,E_8$. We place an edge $e$ into $E_i$ if $(c(e),\ell(e))\in C_i$ where $c(e)$ is the color of $e$. An edge goes into each $E_i$ with the same probability, 1/8, and so w.h.p. we find that $|E_i|\geq m/10$ for $i=1,2,\ldots,8$. If $|E_i|=m_i$ then the subgraph $H_i$ induced by $E_i$ is distributed as $G^{(\n)}_{n,m_i}$ and so we can apply Theorem \ref{th1} to argue that w.h.p. each $H_i$ contains a rainbow perfect matching $M_i$. If we let $\G=\bigcup_{i=1}^8M_i$ and drop the $\ell(e)$ part of the coloring, then it almost fits the hypothesis of Theorem \ref{JW}. It is 8-regular and each color appears exactly 4 times. We say almost, because $\G$ is in general, a multi-graph. It is however well-known, see for example Wormald \cite{W} that $\Gamma$ is contiguous to the random 8-regular graph $G_{n,8}$ and this implies Theorem \ref{th2} for the case where $n$ is even. 

When $n=2\n+1$ is odd, and $m=\om n\log n$ where $\om\to\infty$ then we proceed as follows. Let $p=m/N$ and for convenience, we work with $G=G^{(n)}_{n,p}$, an edge colored copy of $G_{n,p}$, in place of $G^{(n)}_{n,m}$. We decompose $G=\G_1\cup\G_2\cup \cdots\cup\G_{\om/K}$ where each $\HH_i$ is an almost independent copy of $G_{n,p';n}$ where $1-p=(1-p')^{\om/K}$.
The dependence will come when we insist that if an edge appears in $\HH_i$ and $\G_{i'}$ then it has the same color in both. We fix an $i$ and we choose some edge $e=\set{x,y}$ and contract it to a vertex $\xi$.
We also delete all edges of $\HH_i$ that have color $c(e)$ to obtain $\HH_i'$. 
Edges in $\HH_i'$ between vertices not including $\xi$ now occur independently with probability $p''=(n-1)p'/n$.
Edges involving $\xi$ appear with about twice this probability. Now $n-1$ is even and by making $K$ large enough, we can make the probability that any $\HH_i'$ fails to contain a rainbow Hamilton cycle $H_i$ less than $1/n$. Let $e_j=\set{\xi,z_j},j=1,2$ be the edges of $H_i$ that are incident with $\xi$. Now replace $\xi$ with $x,y$. If the edges $e_1,e_2$ are disjoint in $\HH_i$ then $H_i$ can be lifted to a rainbow Hamilton cycle in $\HH_i$. This happens with probability 1/2 and the lift successes are independent. So the probability that none of the $\HH_i$ contain a rainbow Hamilton cycle is at most $2^{-\om/K}\to 0$. This completes the proof of Theorem \ref{th2}.\\

\end{document}